\documentclass{amsart}
\usepackage{amsmath, amssymb, amsthm}
\usepackage[margin=1in]{geometry}
\usepackage[pdftex]{hyperref}

\makeatletter

\@addtoreset{equation}{section}
\makeatother
  
\newcommand{\supp}{\mathrm{supp}\hspace{0.5mm}}
\newcommand{\op}{\mathrm{Op}}

\newcommand{\R}{\mathbb{R}}
\newcommand{\Z}{\mathbb{Z}}
\newcommand{\calF}{\mathcal{F}}
\newcommand{\calS}{\mathcal{S}}
\newcommand{\aand}{\quad\mathrm{and}\quad}
\newcommand{\iif}{\quad\textrm{if}\quad}
\newcommand{\ffor}{\quad\textrm{for}\quad}

\newtheorem{theorem}{Theorem}[section]
\newtheorem{proposition}[theorem]{Proposition}
\newtheorem{corollary}[theorem]{Corollary}
\newtheorem{lemma}[theorem]{Lemma}
\newtheorem*{theoremA}{Theorem A}

\theoremstyle{definition}
\newtheorem{remark}[theorem]{Remark}


\begin{document}

\title[Pseudodifferential operators on $\alpha$-modulation spaces]
{Pseudodifferential operators with symbols in the H\"ormander class
$S^0_{\alpha,\alpha}$ on $\alpha$-modulation spaces}

\author[T. Kato]{Tomoya Kato}
\author[N. Tomita]{Naohito Tomita}

\address{Department of Mathematics, Graduate School of Science, Osaka University, Toyonaka, Osaka 560-0043, Japan}
\email[Tomoya Kato]{t.katou@cr.math.sci.osaka-u.ac.jp}
\email[Naohito Tomita]{tomita@math.sci.osaka-u.ac.jp}

\keywords{Pseudodifferential operators, H\"ormander class, $\alpha$-modulation spaces}
\subjclass[2010]{35S05, 42B35}

\begin{abstract}
In this paper, we study the boundedness of pseudodifferential operators
with symbols in the H\"ormander class $S^0_{\rho,\rho}$
on $\alpha$-modulation spaces $M_{p,q}^{s,\alpha}$,
and consider the relation between $\alpha$ and $\rho$.
In particular, 
we show that pseudodifferential operators with symbols in $S^0_{\alpha,\alpha}$
are bounded on all $\alpha$-modulation spaces $M^{s,\alpha}_{p,q}$,
for arbitrary $s\in\R$
and for the whole range of exponents $0 < p,q \leq \infty$.
\end{abstract}

\maketitle

\section{Introduction} \label{sec1}

In Gr\"obner's Ph.D. thesis \cite{grobner 1992}, $\alpha$-modulation spaces $M_{p,q}^{s,\alpha}$ were introduced as intermediate spaces
between modulation spaces $M_{p,q}^s$ and (inhomogeneous) Besov spaces $B^s_{p,q}$.
The parameter $\alpha \in [0,1)$ determines how the frequency space is decomposed. 
Modulation spaces which are constructed
by the uniform frequency decomposition correspond to the case $\alpha = 0$
and Besov spaces which are constructed by the dyadic decomposition
can be regarded as the limiting case $\alpha \to 1$.
See the next section for the precise definition of $\alpha$-modulation spaces.

Let $b \in \R$, $0 \le \delta \le \rho \le 1$,
$\delta<1$, $1<p,q<\infty$ and $s \in \R$.
It is known that
all operators of class $\op(S^b_{\rho,\delta})$
are bounded on $L^p(\R^n)$ if and only if $b \le -|1/p-1/2|(1-\rho)n$
(see \cite[Chapter VII, Section 5.12]{stein 1993}),
and the same condition assures the $B^s_{p,q}$-boundedness,
namely the boundedness of operators of class
$\op(S^b_{\rho,\delta})$, $b \leq -|1/p-1/2|(1-\rho)n$,
on $B^s_{p,q}(\R^n)$ holds
(see, e.g., Bourdaud \cite{bourdaud 1982}, Gibbons \cite{gibbons 1978}
and Sugimoto \cite{sugimoto 1988}).
It should be remarked that
the boundedness of operators of class $\op(S^0_{1,1})$
on $B^s_{p,q}$ also holds for $s>0$ (see the references above).
On the other hand, as a difference between boundedness
on Besov and modulation spaces,
it is known that all operators of class $\op(S^0_{0,0})$
are bounded on $M^s_{p,q}$
(see, e.g., Gr\"ochenig and Heil \cite{grochenig heil 1999},
Tachizawa \cite{tachizawa 1994} and Toft \cite{toft 2004}).
Moreover, Sugimoto and Tomita \cite[Theorem 2.1]{sugimoto tomita PAMS 2008}
proved that the boundedness of operators of class
$\op(S^0_{1,\delta})$, $0<\delta<1$, on $M^0_{p,q}$, $q \neq 2$,
does not hold in general,
and also

\begin{theoremA}[{\cite[Theorem 1]{sugimoto tomita JFAA 2008}}]
Let $1 < q < \infty$, $b \in \R$,
$0 \leq \delta \leq \rho \leq 1$ and $ \delta < 1$.
Then, all pseudodifferential operators with symbols in
$S_{\rho, \delta}^b$ are bounded on $M_{2,q}^{0}(\R^n)$
if and only if $b \leq - | 1/q - 1/2 | \delta n$.
\end{theoremA}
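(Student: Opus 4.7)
The plan is to prove sufficiency by reduction-and-interpolation, and necessity via a family of critical symbols. For the sufficiency, the first step is to note the inclusion $S^b_{\rho,\delta} \subset S^b_{\delta,\delta}$ whenever $\rho \geq \delta$ (larger $\rho$ only strengthens the decay of $\xi$-derivatives), so it suffices to treat symbols in $S^b_{\delta,\delta}$.

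The case $q = 2$ is immediate from $M^0_{2,2} = L^2(\R^n)$: the required condition $b \leq 0$ is precisely the Calder\'on--Vaillancourt theorem for $\op(S^0_{\delta,\delta})$ with $\delta < 1$. For the endpoint $q = \infty$ (and $q = 1$ by duality), I would decompose
\[
T_\sigma = \sum_{j,k \in \Z^n} \Box_j \, T_\sigma \, \Box_k,
\]
where $\Box_k$ are the unit isometric decomposition operators defining the $M^0_{2,q}$ norm. The $S^b_{\delta,\delta}$ estimates force the matrix element $\|\Box_j T_\sigma \Box_k\|_{L^2 \to L^2}$ to be essentially supported on the band $\{(j,k) : |j - k| \lesssim \langle k \rangle^\delta\}$, and an almost-orthogonality argument---exploiting that for fixed $j$ the outputs $\Box_j T_\sigma \Box_k f$ land in a common unit frequency cube and recombine with an $\ell^2$-type gain---should yield the sharp endpoint exponent $b \leq -\delta n/2$. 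Complex interpolation on $q$ between the two endpoints then produces $b \leq -|1/q - 1/2|\delta n$ for every $1 < q < \infty$.

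For necessity, I would construct, for each $b$ strictly exceeding the critical value, a family $\{\sigma_R\}_{R > 0} \subset S^b_{\rho,\delta}$ with uniform seminorms whose operator norms on $M^0_{2,q}$ blow up as $R \to \infty$. A natural candidate is a symbol concentrated near frequency $R$ whose action behaves essentially as a translation in Fourier by an amount $\sim R^\delta$; testing against a Gabor atom spread across $\sim R^{\delta n}$ modulation cells and comparing the two $M^0_{2,q}$-norms forces the sharp exponent $-|1/q - 1/2|\delta n$ to appear as the equality case.

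The main obstacle will be extracting the precise $\langle k \rangle^{\delta n/2}$ saving at the $q = \infty$ endpoint: a naive count of the $\sim \langle k \rangle^{\delta n}$ frequency blocks interacting under an $S^b_{\delta,\delta}$ symbol loses a full $\delta n$ rather than the sharp $\delta n/2$. Achieving the correct factor requires a genuine $L^2$ almost-orthogonality---for example, a Cotlar--Stein-type lemma for the family $\{\Box_j T_\sigma \Box_k\}_k$ at fixed $j$, or, equivalently, a Gabor-frame reformulation in which the interaction matrix is square-summable with exactly the right gain.
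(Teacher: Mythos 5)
First, a point of orientation: Theorem A is not proved in this paper at all --- it is imported verbatim from Sugimoto--Tomita \cite{sugimoto tomita JFAA 2008}, so there is no in-paper proof to compare against. Judged on its own terms, your outline does reproduce the broad strategy of the argument in the cited reference: reduce sufficiency to $S^b_{\delta,\delta}$, dispose of $q=2$ by Calder\'on--Vaillancourt, control the blocks $\varphi_j(D)\,\sigma(X,D)\,\varphi_k(D)$ at the endpoints $q=1,\infty$, interpolate, and obtain necessity from symbols that redistribute frequency content over $\sim R^{\delta n}$ unit cells. You have also correctly located the crux (the $\delta n/2$ versus $\delta n$ loss at $q=\infty$).

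Two steps, as literally written, would not survive being made precise. (i) \emph{Interpolation.} You cannot interpolate the boundedness of a single fixed operator between two statements about two \emph{different} symbol classes ($b=0$ at $q=2$, $b=-\delta n/2$ at $q=\infty$): a symbol of order $-|1/q-1/2|\delta n$ is not of order $-\delta n/2$. You must either embed $\sigma$ in an analytic family such as $\sigma_z=\langle\xi\rangle^{\delta n(\theta-z)/2}\sigma$ and apply Stein interpolation on $[M^0_{2,2},M^0_{2,\infty}]_\theta$, or --- cleaner --- prove one quantitative block estimate of the form $\|\varphi_j(D)\sigma(X,D)\varphi_k(D)\|_{L^2\to L^2}\lesssim\langle k\rangle^{b}\,(1+\langle k\rangle^{-\delta}|j-k|)^{-L}$ and interpolate the resulting Schur matrix acting on the sequence $\{\|\varphi_k(D)f\|_{L^2}\}_k$ between $\ell^1$, $\ell^2$ and $\ell^\infty$. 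Incidentally, the endpoint gain is genuine but comes from orthogonality on the \emph{input} side rather than from Cotlar--Stein: writing $\varphi_j(D)\sigma(X,D)f=\sum_k\varphi_j(D)\sigma(X,D)\varphi_k(D)f$ and using Cauchy--Schwarz together with $\sum_k\|\varphi_k(D)g\|_{L^2}^2\lesssim\|g\|_{L^2}^2$ gives the factor $\bigl(\sum_{|k-j|\lesssim\langle j\rangle^{\delta}}\|\varphi_k(D)f\|_{L^2}^2\bigr)^{1/2}\le\langle j\rangle^{\delta n/2}\|f\|_{M^0_{2,\infty}}$, which is exactly the half power you need. (ii) \emph{Necessity.} A symbol acting as a single Fourier translation by $\sim R^{\delta}$ maps one modulation cell to one modulation cell and detects nothing; the counterexample must be a superposition of $\sim R^{\delta n}$ modulations $e^{ix\cdot l}$ with $|l|\lesssim R^{\delta}$, which collapses (or spreads) that many cells. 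Moreover, since for necessity you may not enlarge the class, the construction must be verified to lie in $S^b_{\rho,\delta}$ for the \emph{given} $\rho\ge\delta$, which forces the $\xi$-localization to live at scale $\langle\xi\rangle^{\rho}$ and changes the cell count; this is handled in the cited paper but is absent from your sketch. These points are repairable, but as it stands the proposal is a program rather than a proof.
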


In this paper, we discuss the $M_{p,q}^{s,\alpha}$-boundedness of
pseudodifferential operators with symbols in the so-called
exotic class $S^0_{\rho, \rho}$,
and try to clarify the relation between $\alpha$ and $\rho$.
For the boundedness of pseudodifferential operators,
we will use the following terminology with a slight abuse.
Let $s,t \in \R$. 
If there exist a constant $C_\sigma$ such that the estimate
\begin{equation*}
\left\| \sigma(X,D) f \right\|_{M_{p,q}^{t,\alpha}} 
\leq
C_\sigma
\left\| f \right\|_{M_{p,q}^{s,\alpha} }
\ffor
f \in \calS (\R^n)
\end{equation*}
holds, 
then we simply say that
the pseudodifferential operator $\sigma(X,D)$ 
is bounded from
$M_{p,q}^{s,\alpha} $ to $M_{p,q}^{t,\alpha}$.
In particular, if $s=t$, then we say that 
$\sigma(X,D)$ 
is bounded on $M_{p,q}^{s,\alpha}$.
If $p,q < \infty$, the boundedness mentioned above can be extended to the formal one by density.
Borup \cite{borup 2003} proved that
all pseudodifferential operators of class $\op(S_{\alpha, \alpha}^0)$ are bounded
from $M_{p,q}^{s,\alpha}$ to $M_{p,q}^{s-(1-\alpha),\alpha}$
for the space dimension $n=1$.
Borup and Nielsen \cite{borup nielsen AM 2006, borup nielsen 2008} also obtained
the boundedness of operators of class $\op(S_{\alpha, 0}^0)$
on $M_{p,q}^{s,\alpha}$.
Our purpose is to improve the result of \cite{borup 2003}
by removing the loss of the smoothness $1-\alpha$
and that of \cite{borup nielsen AM 2006, borup nielsen 2008}
by replacing $\delta=0$ with $\delta=\alpha$
in the full range $0<p,q \le \infty$.
Our main result is the following:

\begin{theorem} \label{main theorem}
Let $0 \leq \alpha < 1$, $ 0 < p , q \leq \infty$ and $s \in \R $.
Then, all pseudodifferential operators with symbols in $S_{\alpha, \alpha}^0$
are bounded on $M_{p,q}^{s,\alpha} (\R^n)$.
\if0
that is,
there exists a positive integer $N$ such that the estimate
\begin{equation} \label{main theorem estimate}
\left\| \sigma(X,D) f \right\|_{M_{p,q}^{s,\alpha}} \lesssim
\left\|\sigma ; S^0_{\alpha,\alpha}\right\|_N
\left\|  f  \right\|_{M_{p,q}^{s,\alpha} } 
\end{equation}
holds for all $\sigma \in S_{\alpha, \alpha}^0$ 
and all $f \in \calS(\R^n)$.
\fi
\end{theorem}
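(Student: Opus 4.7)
My plan is to exploit the $\alpha$-covering structure underlying $M^{s,\alpha}_{p,q}$ to decompose $\sigma(X,D)$ into localized pieces, prove an almost-diagonal estimate on these pieces, and sum up. I would fix a smooth partition of unity $\{\phi_k\}_{k\in\Z^n}$ subordinate to an admissible $\alpha$-covering, so that $\supp\phi_k\subset B(\xi_k,c\langle\xi_k\rangle^\alpha)$ and
\[
\|f\|_{M^{s,\alpha}_{p,q}}\sim\bigl\|\langle\xi_k\rangle^s\phi_k(D)f\bigr\|_{\ell^q_k(L^p_x)}.
\]
Using the identity $\sigma(X,D)\phi_j(D)=\sigma_j(X,D)$, with $\sigma_j(x,\xi):=\sigma(x,\xi)\phi_j(\xi)$, and $f=\sum_j\phi_j(D)f$, I would write
\[
\phi_k(D)\sigma(X,D)f=\sum_j T_{k,j}f,\qquad T_{k,j}:=\phi_k(D)\sigma_j(X,D).
\]

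The core technical step will be the almost-diagonal estimate
\[
\|T_{k,j}f\|_{L^p}\le C_N\bigl(1+d_\alpha(\xi_k,\xi_j)\bigr)^{-N}\|\phi_j(D)f\|_{L^p},\qquad d_\alpha(\xi_k,\xi_j):=\frac{|\xi_k-\xi_j|}{\langle\xi_k\rangle^\alpha+\langle\xi_j\rangle^\alpha}.
\]
In the diagonal case ($d_\alpha\lesssim 1$), I would use the rescaling $y=\langle\xi_j\rangle^\alpha x$, $\eta=(\xi-\xi_j)/\langle\xi_j\rangle^\alpha$: the $S^0_{\alpha,\alpha}$ seminorms of $\sigma$ translate exactly into uniform $S^0_{0,0}$ seminorms of the rescaled symbol $\widetilde\sigma_j(y,\eta)$, which has compact $\eta$-support; standard $S^0_{0,0}$ boundedness (Calder\'on--Vaillancourt, together with band-limited arguments for $p<1$) would then give a uniform $L^p$-bound. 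In the off-diagonal case, I would express the kernel of $T_{k,j}$ as an oscillatory integral with phase $(x-w)\cdot\xi+(w-y)\cdot\eta$ over $(\xi,\eta,w)\in\supp\phi_k\times\supp\phi_j\times\R^n$ and integrate by parts in $w$, using $|\partial_w^\beta\sigma|\lesssim\langle\eta\rangle^{\alpha|\beta|}$ together with $|\xi-\eta|\sim|\xi_k-\xi_j|$ on the relevant supports; each iteration would produce a factor $\langle\xi_j\rangle^\alpha/|\xi_k-\xi_j|\lesssim d_\alpha(\xi_k,\xi_j)^{-1}$. Simultaneous integrations by parts in $\xi$ and $\eta$, exploiting the $\alpha$-scale smoothness of $\phi_k$ and $\phi_j$, would furnish the spatial decay in $|x-y|$ needed to legitimize the Schur-type kernel estimate.

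Summation would then be routine: since $\langle\xi_k\rangle^s\sim\langle\xi_j\rangle^s$ whenever $d_\alpha(\xi_k,\xi_j)\lesssim 1$, the weights transfer inside the sum, and a $\min(p,1)$-triangle inequality in $L^p$ combined with a discrete Young inequality in $\ell^q$ (valid throughout $0<p,q\le\infty$) would yield the desired estimate.

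The principal obstacle will be obtaining a uniform $L^p$-bound on $T_{k,j}$ for $0<p<1$, where ordinary kernel-Schur methods do not directly control $L^p$ norms. To handle this, I would observe that $T_{k,j}f$ is Fourier-supported in $\supp\phi_k$, a ball of radius $\sim\langle\xi_k\rangle^\alpha$, so a Plancherel--Polya / Nikol'skii inequality interchanges $L^p$ and $L^\infty$ norms up to losses that cancel against the corresponding losses for the input $\phi_j(D)f$. A secondary difficulty is the non-compact $x$-dependence of $\sigma$, which makes the $w$-integration by parts formal; I would bypass this by first expanding $\sigma(\cdot,\xi)$ in a Fourier series on a cube of side $\sim\langle\xi\rangle^{-\alpha}$, with rapid decay of Fourier coefficients inherited from the $x$-smoothness in $S^0_{\alpha,\alpha}$, reducing to a summable superposition of $x$-independent Fourier multipliers each of which is handled directly.
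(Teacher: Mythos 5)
Your strategy is essentially the one the paper follows: decompose along the $\alpha$-covering, prove an almost-diagonal estimate whose off-diagonal decay comes from the $x$-regularity of the symbol at the matched scale $\langle \xi\rangle^{\alpha}$, use band-limitedness and Peetre-type maximal inequalities to reach $0<p\le 1$ and $p=\infty$, and sum with the $\min(1,p)$- and $\min(1,q)$-triangle inequalities. The one step I would not let pass as written is your treatment of the $x$-dependence. A Fourier series of $\sigma(\cdot,\xi)$ on a cube of side $\sim\langle\xi\rangle^{-\alpha}$ represents only the periodic extension of the restriction of $\sigma$ to that cube; since $\sigma(X,D)f(x)$ samples $\sigma(x,\cdot)$ at every $x\in\R^n$, you must either patch cube by cube (introducing rough, non-band-limited coefficient functions that spoil both the $\phi_k(D)$-localization and the $L^p$, $p<1$, arguments) or abandon the series. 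The paper's substitute is a smooth uniform frequency decomposition in the $x$-variable at scale $\langle m\rangle^{A}$, $A=\alpha/(1-\alpha)$, namely $\sigma_{\ell,m}=\varphi_\ell\big(D_x/\langle m\rangle^{A}\big)\sigma\cdot\eta_m^{\alpha}$: each piece $\sigma_{\ell,m}(X,D)f$ then has exactly compact Fourier support in a ball of radius $\sim\langle m\rangle^{A}$ about $\langle m\rangle^{A}(\ell+m)$ (so $\varrho_k^{\alpha}(D)$ of it vanishes unless $|k-m|\lesssim\langle\ell\rangle$) together with an $L^p\to L^p$ bound of size $O(\langle\ell\rangle^{-N})$ — which is precisely your almost-diagonal estimate in discretized form, with all integrations by parts absolutely convergent. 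Two smaller points: for general $s$ the weights $\langle\xi_k\rangle^{s}$ and $\langle\xi_j\rangle^{s}$ are comparable only up to a power of $\big(1+d_\alpha(\xi_k,\xi_j)\big)$ in the off-diagonal regime (harmless for $N$ large, or avoidable by first reducing to $s=0$ via the lift operator and the symbolic calculus, as the paper does); and for $p<1$ the Schur-type kernel bound must indeed be routed through the maximal inequality for band-limited functions, since a pointwise kernel estimate alone does not control $L^p$ quasi-norms.
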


\begin{remark}
More precisely, we will prove that
there exists a positive integer $N$ such that the estimate
\begin{equation} \label{main theorem estimate}
\left\| \sigma(X,D) f \right\|_{M_{p,q}^{s,\alpha}} 
\lesssim
\left\|\sigma ; S^0_{\alpha,\alpha}\right\|_N
\left\|  f  \right\|_{M_{p,q}^{s,\alpha} } 
\end{equation}
holds for all $\sigma \in S_{\alpha, \alpha}^0$ 
and all $f \in \calS(\R^n)$.
\end{remark}

Recalling the relation $S^0_{\rho, \delta_1} \subset S^{0}_{\rho, \delta_2}$ 
for $\delta_1 \leq \delta _2$, 
we see that the class $S_{\alpha, \alpha}^0 $ in Theorem \ref{main theorem} is wider 
than $S_{\alpha, 0}^0$ in \cite{borup nielsen AM 2006}. 
Furthermore, we recall the well-known result 
by Calder\'on and Vaillancourt \cite{calderon vaillancourt 1972},
where it was stated that for any $0 \leq \alpha < 1$, 
$\op (S^0_{\alpha, \alpha}) \subset \mathcal{L} (L^2)$.
Since we have $M^{0,\alpha}_{2,2} = L^2$ for any $0 \leq \alpha < 1 $,
we see that Theorem \ref{main theorem} contains their result.

Also, more generally, we have the following.

\begin{corollary} \label{main cor}
Let $0 \leq \alpha < 1$, $ 0 < p , q \leq \infty$, $s,b \in \R $
and $0 \leq \delta \leq \alpha \leq \rho \leq 1$. 
Then, all pseudodifferential operators with symbols in $S_{\rho, \delta}^b$ are bounded from
$M_{p,q}^{s,\alpha} (\R^n)$ to $M_{p,q}^{s-b,\alpha} (\R^n)$.
\if0
that is, there exists a positive integer $N$ such that the estimate
\begin{equation*}
\left\| \sigma(X,D) f \right\|_{M_{p,q}^{s-b,\alpha}} \lesssim
\left\|\sigma ; S^b_{\rho,\delta}\right\|_N
\left\|  f  \right\|_{M_{p,q}^{s,\alpha} } 
\end{equation*}
holds for all $\sigma \in S_{\rho, \delta}^b$ 
and all $f \in \calS(\R^n)$.
\fi
\end{corollary}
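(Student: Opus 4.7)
The plan is to reduce Corollary~\ref{main cor} to Theorem~\ref{main theorem} in two steps: an inclusion of H\"ormander classes, and a lifting estimate for the Bessel-type Fourier multiplier $\langle D\rangle^b$.

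First, I would observe that $S^b_{\rho,\delta}\subset S^b_{\alpha,\alpha}$ whenever $\delta\le\alpha\le\rho$, since the inequality $b-\rho|\gamma|+\delta|\beta|\le b-\alpha|\gamma|+\alpha|\beta|$ combined with $\langle\xi\rangle\ge 1$ turns the $(\rho,\delta)$-estimate into the $(\alpha,\alpha)$-estimate. It thus suffices to treat $\sigma\in S^b_{\alpha,\alpha}$, and for such $\sigma$ I would set $\sigma_0(x,\xi):=\sigma(x,\xi)\langle\xi\rangle^{-b}$. A Leibniz computation, using $|\partial_\xi^\gamma\langle\xi\rangle^{-b}|\lesssim\langle\xi\rangle^{-b-|\gamma|}\le\langle\xi\rangle^{-b-\alpha|\gamma|}$, shows that $\sigma_0\in S^0_{\alpha,\alpha}$. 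Since $\langle\xi\rangle^{-b}$ is independent of $x$, we obtain the exact factorization
\begin{equation*}
\sigma(X,D)f=\sigma_0(X,D)\bigl(\langle D\rangle^b f\bigr), \qquad f\in\calS(\R^n).
\end{equation*}
Theorem~\ref{main theorem} then yields the boundedness of $\sigma_0(X,D)$ on $M^{s-b,\alpha}_{p,q}$, reducing the corollary to the lifting estimate $\|\langle D\rangle^b f\|_{M^{s-b,\alpha}_{p,q}}\lesssim\|f\|_{M^{s,\alpha}_{p,q}}$.

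For this lifting I would use the frequency decomposition $\{\eta_k^\alpha\}_k$ subordinate to the $\alpha$-covering that defines $M^{s,\alpha}_{p,q}$, whose $k$-th piece is centered at some $\xi_k$ with support radius $\sim\langle\xi_k\rangle^\alpha$. Choosing a fattened cutoff $\tilde\eta_k^\alpha$ with $\eta_k^\alpha\tilde\eta_k^\alpha=\eta_k^\alpha$, one writes $\eta_k^\alpha(D)\langle D\rangle^b f=m_k(D)\tilde\eta_k^\alpha(D)f$, where $m_k(\xi):=\eta_k^\alpha(\xi)\langle\xi\rangle^b$. Since $\langle\xi\rangle\sim\langle\xi_k\rangle$ on $\supp\eta_k^\alpha$, rescaling the box of size $\langle\xi_k\rangle^\alpha$ to a unit box together with a bandlimited-function $L^p$-estimate (a Peetre-type inequality when $p<1$) gives $\|m_k(D)g\|_{L^p}\lesssim\langle\xi_k\rangle^b\|g\|_{L^p}$ uniformly in $k$. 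Inserting this into the $M^{s-b,\alpha}_{p,q}$-norm and summing in $\ell^q$ against the weight $\langle\xi_k\rangle^{(s-b)q}$ yields the desired inequality.

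The main technical step is the uniform Fourier multiplier bound just described, especially for $0<p\le 1$ where Mikhlin-type theorems are unavailable; the point is that after the natural $\alpha$-rescaling, the symbols $\langle\xi_k\rangle^{-b}m_k(\xi_k+\langle\xi_k\rangle^\alpha\,\cdot\,)$ and their Fourier transforms are uniformly controlled. Once this bandlimited estimate is in hand, the corollary follows immediately from Theorem~\ref{main theorem}.
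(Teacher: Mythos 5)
Your proposal is correct, and it follows the same overall strategy as the paper (embed $S^b_{\rho,\delta}\subset S^b_{\alpha,\alpha}$, strip off the order $b$ with a Bessel potential, then apply Theorem~\ref{main theorem} together with the lift isomorphism), but it differs in one genuine and rather pleasant way. The paper writes $\sigma(X,D)=J^{b}\,\tau(X,D)$ with $\tau(X,D)=J^{-b}\sigma(X,D)$ and must invoke the symbolic calculus to know that $\tau\in S^0_{\alpha,\alpha}$, since composing a Fourier multiplier on the \emph{left} of an $x$-dependent operator does not simply multiply symbols. You instead compose on the \emph{right}: because $\langle\xi\rangle^{-b}$ is $x$-independent and acts first, the factorization $\sigma(X,D)f=\sigma_0(X,D)\bigl(\langle D\rangle^{b}f\bigr)$ with $\sigma_0(x,\xi)=\sigma(x,\xi)\langle\xi\rangle^{-b}$ is exact, and membership $\sigma_0\in S^0_{\alpha,\alpha}$ is a one-line Leibniz computation. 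This avoids the symbolic calculus entirely for this corollary, which is a small but real simplification. Two remarks. First, the lifting estimate $\|\langle D\rangle^{b}f\|_{M^{s-b,\alpha}_{p,q}}\lesssim\|f\|_{M^{s,\alpha}_{p,q}}$ that you spend the second half of the argument establishing is precisely Proposition~\ref{lift op} of the paper (the isomorphism $(I-\Delta)^{t/2}:M^{s,\alpha}_{p,q}\to M^{s-t,\alpha}_{p,q}$), so you could simply cite it; your multiplier-on-$\alpha$-boxes argument, using Proposition~\ref{convolution 0<p<1} for $p\le 1$, is essentially how that proposition is proved and is sound, just redundant here. Second, to make the derivative bounds $|\partial_\xi^\gamma\langle\xi\rangle^{-b}|\lesssim\langle\xi\rangle^{-b-|\gamma|}$ literally true near $\xi=0$ you should take the smooth Bessel weight $(1+|\xi|^2)^{-b/2}$ rather than $(1+|\xi|)^{-b}$ (the paper's bracket convention); this is cosmetic and does not affect the argument.
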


As a direct consequence of Theorem \ref{main theorem}, Theorem A 
and inclusion relations between modulation and $\alpha$-modulation spaces, 
we immediately have the following statement.

\begin{corollary} \label{sharp cor}
Let $1 < q < \infty$, $q\neq 2$, $s \in \R$, $0 \leq \delta \leq \rho \leq 1$, 
$0 \leq \delta, \alpha < 1$ and $\alpha \leq \rho$.
Then, all pseudodifferential operators with symbols in $S_{\rho, \delta}^0$
are bounded on $M_{2,q}^{s,\alpha} (\R^n)$
if and only if $\delta \leq \alpha$. 
\end{corollary}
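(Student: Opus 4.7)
Assume $\delta\leq\alpha$. The standard monotonicity of the H\"ormander classes ($S^{b}_{\rho,\delta}$ is non-decreasing in $\delta$ and non-increasing in $\rho$) together with the hypotheses $\delta\leq\alpha$ and $\alpha\leq\rho$ gives the chain of inclusions
\[
S^{0}_{\rho,\delta}\;\subset\;S^{0}_{\rho,\alpha}\;\subset\;S^{0}_{\alpha,\alpha}.
\]
Applying Theorem~\ref{main theorem} with $p=2$ to the rightmost class yields $M^{s,\alpha}_{2,q}$-boundedness for every $\sigma\in S^{0}_{\rho,\delta}$.

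\textbf{Necessity.} For the converse, assume that every $\sigma\in S^{0}_{\rho,\delta}$ induces a bounded operator on $M^{s,\alpha}_{2,q}$; the plan is to deduce $\delta\leq\alpha$ by contradiction. Suppose $\delta>\alpha$. The heuristic is that at each frequency scale $|\xi|\sim N$ the $\alpha$-adapted covering has boxes of side $\sim N^{\alpha}$, while a symbol in $S^{0}_{\rho,\delta}$ can smear frequencies by $\sim N^{\delta}$, hence redistribute energy across $\sim N^{n(\delta-\alpha)}\to\infty$ of these boxes. For $q\neq 2$ this $\ell^{q}$-versus-$\ell^{2}$ mismatch between concentrated and spread distributions forces the $M^{s,\alpha}_{2,q}$-norm to blow up, reproducing in the $\alpha$-modulation setting the sharpness mechanism of Theorem~A.

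Concretely, for $q<2$ I would pick, at each large scale $N_{j}\to\infty$, a Schwartz test function $f_{j}$ with Fourier transform concentrated in a single $\alpha$-box near $|\xi|\sim N_{j}$, together with a symbol $\sigma\in S^{0}_{\rho,\delta}$ constructed by modulating a fixed Schwartz profile in $x$ at frequency $\sim\langle\xi\rangle^{\delta}$, so that $\op(\sigma)f_{j}$ spreads over $\sim N_{j}^{n(\delta-\alpha)}$ boxes of the $\alpha$-covering. Since $\|f_{j}\|_{L^{2}}\sim 1$, a direct computation should then give
\[
\|f_{j}\|_{M^{s,\alpha}_{2,q}}\;\sim\;N_{j}^{s},\qquad
\|\op(\sigma)f_{j}\|_{M^{s,\alpha}_{2,q}}\;\gtrsim\;N_{j}^{s}\cdot N_{j}^{n(\delta-\alpha)(1/q-1/2)},
\]
and the ratio blows up in $j$. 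The case $q>2$ is handled by duality (or by reversing the roles of ``concentrated'' and ``spread''), where one uses a symbol that focuses input energy distributed across many $\alpha$-boxes into a single one.

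\textbf{Main obstacle.} The principal difficulty is the rigorous construction of the symbol $\sigma$: one must verify that it genuinely belongs to $S^{0}_{\rho,\delta}$ uniformly in the scale $N_{j}$ (the naive modulation ansatz has $x$-factors that can break the $\xi$-derivative bounds, especially when $\rho+\delta>1$), and that the output $\op(\sigma)f_{j}$ really does spread its mass over the predicted number of $\alpha$-boxes, rather than accidentally refocusing through cancellation. Both issues are the $\alpha$-modulation analogue of the sharpness step proved in~\cite{sugimoto tomita JFAA 2008} for Theorem~A, and they are what make the necessity direction more than a purely formal deduction.
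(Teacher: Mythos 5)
Your sufficiency argument is correct and is exactly the paper's ``IF'' part: $S^{0}_{\rho,\delta}\subset S^{0}_{\alpha,\alpha}$ for $\delta\leq\alpha\leq\rho$ plus Theorem \ref{main theorem}.

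The necessity direction, however, has a genuine gap: you sketch a counterexample construction (a symbol modulated at frequency $\sim\langle\xi\rangle^{\delta}$ spreading a single $\alpha$-box over $\sim N^{n(\delta-\alpha)}$ boxes) but you do not carry it out, and you yourself identify the unresolved obstacles --- verifying that the modulated symbol lies in $S^{0}_{\rho,\delta}$ uniformly in the scale, and ruling out cancellation in the output. As written, the ``only if'' half is a plausible heuristic, not a proof. Moreover, your closing remark that the necessity is ``more than a purely formal deduction'' from Theorem~A is precisely where you diverge from the truth: the paper shows it \emph{is} a formal deduction. The key input you are missing is the pair of embeddings (Lemma \ref{embedding 0-1}, from Han--Wang and Toft--Wahlberg)
\begin{equation*}
M_{2,q}^{s_1,\alpha}\subset M_{2,q}^{0}\subset M_{2,q}^{s_2,\alpha},
\qquad s_1=n\alpha\max(0,\tfrac1q-\tfrac12),\quad s_2=n\alpha\min(0,\tfrac1q-\tfrac12).
\end{equation*}
Assuming $\op(S^{0}_{\rho,\delta})\subset\mathcal{L}(M_{2,q}^{s,\alpha})$, one conjugates with Bessel potentials $J^{t}=(I-\Delta)^{t/2}$ and uses the symbolic calculus together with Proposition \ref{lift op} to conclude that every $\widetilde\sigma\in S^{-s_1+s_2}_{\rho,\delta}$ gives a bounded operator on the ordinary modulation space $M^{0}_{2,q}$: schematically,
\begin{equation*}
\|\widetilde\sigma(X,D)f\|_{M^{0}_{2,q}}
\lesssim\|\widetilde\sigma(X,D)f\|_{M^{s_1,\alpha}_{2,q}}
\lesssim\|J^{-s+s_1}\widetilde\sigma(X,D)J^{s-s_2}\,J^{-s+s_2}f\|_{M^{s,\alpha}_{2,q}}
\lesssim\|f\|_{M^{s_2,\alpha}_{2,q}}
\lesssim\|f\|_{M^{0}_{2,q}}.
\end{equation*}
Since $-s_1+s_2=-n\alpha|1/q-1/2|$, Theorem~A (whose counterexample machinery is exactly the sharpness mechanism you were trying to rebuild) forces $-n\alpha|1/q-1/2|\leq-n\delta|1/q-1/2|$, and $q\neq2$ gives $\delta\leq\alpha$. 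So the hard analytic work is entirely outsourced to Theorem~A and the embedding lemma; if you want to complete your proof, either cite these and run the reduction above, or genuinely finish the symbol construction you outlined --- the latter is a substantial piece of work that your proposal does not contain.
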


To conclude the overview of our results, we comment on the optimality of the symbol class
in Theorem \ref{main theorem}. 
Corollary \ref{sharp cor} implies that 
$\op (S_{\alpha + \varepsilon, \alpha + \varepsilon}^0) 
\not \subset 
\mathcal{L}(M_{2,q}^{s,\alpha})$, 
$q \neq 2$, for any $0 < \varepsilon < 1 - \alpha$. 
On the other hand, 
$\op (S_{\alpha - \varepsilon, \alpha - \varepsilon}^0) 
\not \subset 
\mathcal{L}(M_{p,q}^{s,\alpha})$,
$0 < p < 1$,
for any $0 < \varepsilon < \alpha$ 
(see Remark \ref{alpha -}). 
Therefore, the class $S_{\alpha, \alpha}^0$ in Theorem \ref{main theorem} 
seems to be optimal to obtain the $M_{p,q}^{s, \alpha}$-boundedness.

The plan of this paper is as follows. In Section \ref{sec2}, we will state basic notations 
which will be used throughout this paper, 
and then introduce the definition and some basic properties of $\alpha$-modulation spaces. 
After stating and proving some lemmas needed to show the main theorem in Section \ref{sec3}, 
we will actually prove it in Section \ref{sec4}.

We end this section by mentioning a remark 
on arguments to give a proof of the boundedness. 
If we prove estimate \eqref{main theorem estimate}
for all Schwartz functions $\sigma$ 
on $ \R^{2n} $, then the same estimate holds 
for all $\sigma \in S_{\alpha, \alpha}^0$ by a limiting argument
(see, e.g., the beginning of the proof of \cite[Chapter VII, Section 2.5, Theorem 2]{stein 1993}).
Hence, in the following statements, we will prove Theorem \ref{main theorem} 
for symbols $\sigma$ belonging to $\calS (\R^{2n})$.



\section{Preliminaries}\label{sec2}

\subsection{Basic notations} \label{sec2.1}
In this section, we collect notations which will be used throughout this paper.
We denote by $\R$, $\Z$ and $\Z_+$
the sets of reals, integers and non-negative integers, respectively. 
The notation $a \lesssim b$ means $a \leq C b$ with a constant $C > 0$ 
which may be different in each occasion, 
and $a \sim b $ means $a \lesssim b$ and $b \lesssim a$. 
We will write $\langle x \rangle = (1 + | x |)$ for $x \in \R^n$ 
and $[s] = \max\{ n \in \Z : n \leq s \}$ for $s \in \R$.
We will also use the notation $A = A(\alpha) = \frac{ \alpha }{ 1 - \alpha }$ 
(especially, in Sections \ref{sec3} and \ref{sec4}).

We denote the Schwartz space of rapidly decreasing smooth functions
on $\R^n$ by $\calS = \calS (\R^n)$ and its dual,
the space of tempered distributions, by $\calS^\prime = \calS^\prime(\R^n)$. 
The Fourier transform of $ f \in \calS (\R^n) $ is given by
\begin{equation*}
\calF f  (\xi) = \widehat {f} (\xi) = \int_{\R^n}  e^{-i \xi \cdot x } f(x) d x,
\end{equation*}
and the inverse Fourier transform of $ f \in \calS (\R^n) $ is given by
\begin{equation*}
\calF^{-1} f (x) = \check {f} (x) = (2\pi)^{-n} \int_{\R^n}  e^{i x \cdot \xi } f( \xi ) d\xi.
\end{equation*}

We next recall the symbol class $S_{\rho, \delta}^b = S_{\rho, \delta}^b ( \R^n \times \R^n ) $ 
for $b \in \R$ and $0 \leq \delta \leq \rho \leq 1$, 
which consists of all functions $\sigma \in C^\infty (\R^n \times \R^n)$ satisfying
\begin{equation*}
| \partial_x^\beta \partial_\xi^\gamma \sigma (x , \xi ) | 
\leq C_{\beta, \gamma} \langle \xi \rangle^{b + \delta | \beta | - \rho |\gamma|}
\end{equation*}
for all multi-indices $\beta, \gamma \in \Z_+^n$,
and set
\begin{equation*}
\left\|\sigma ; S^b_{\rho,\delta}\right\|_{N}
=\max_{|\beta|+|\gamma| \leq N}
\left(\sup_{(x, \xi) \in \R^n \times \R^n}
\langle \xi \rangle^{-(b + \delta | \beta | - \rho |\gamma|)}
| \partial_x^\beta \partial_\xi^\gamma \sigma (x , \xi ) |\right)
\end{equation*}
for $N \in \Z_+$.
Note that $S^{b_1}_{\rho_1, \delta_1} \subset S^{b_2}_{\rho_2, \delta_2}$ holds 
if $b_1 \leq b_2$, $\rho_1 \geq \rho_2$ and $\delta_1 \leq \delta _2$. 
For $\sigma \in S_{\rho, \delta}^b$, 
the pseudodifferential operator $\sigma(X,D)$ is defined by
\begin{equation*}
\sigma (X,D) f (x) 
=(2\pi)^{-n} \int_{\R^n} e^{i x \cdot \xi} \sigma (x,\xi) \widehat f (\xi) d\xi
\end{equation*}
for $ f \in \calS (\R^n) $.
We denote by $\op(S^b_{\rho,\delta})$ the class of all pseudodifferential operators 
with symbols in $S^b_{\rho,\delta}$. 
For the case $0\leq \delta <1$ and $\delta \leq \rho \leq 1$, we know 
the statement of the composition rule about the class $S^b_{\rho,\delta}$ called the symbolic calculus.
That is, if $\sigma \in S^b_{\rho,\delta}$ and $\tau \in S^c_{\rho,\delta}$, 
then there exists a symbol $\theta \in S^{b+c}_{\rho,\delta}$ 
satisfying that $\theta (X,D) = \sigma (X,D) \circ \tau (X,D)$.
Moreover, $\theta$ can be chosen so that
\begin{equation*}
\left\| \theta ; S^{b+c}_{\rho,\delta}\right\|_{N}
\lesssim \left\| \sigma ; S^b_{\rho,\delta}\right\|_{M} \cdot \left\| \tau ; S^c_{\rho,\delta}\right\|_{M},
\end{equation*}
where $M$ depends on $N$, $b$, $c$, $\delta$ and $\rho$.
See Stein \cite[Chapter VII, Section 5.8]{stein 1993}. 
The estimate for the symbols just above can be found in Kumano-go \cite[Lemma 2.4]{kumanogo 1981}.

For $m \in L^\infty (\R^n)$, we write the associated Fourier multiplier operator as
\begin{equation*}
m(D) f = \calF^{-1} \left[m \cdot \calF f \right]
\end{equation*}
for $ f \in \calS (\R^n) $,
and especially the Bessel potential 
as $(I - \Delta )^{s/2} f = \calF^{-1} \left[(1 + | \cdot |^2)^{s/2} \cdot \calF f \right]$ 
for $ f \in \calS (\R^n) $ and $s \in \R$. 

In the following, we recall the definitions and properties of function spaces which we will use.
The Lebesgue space $L^p = L^p(\R^n) $ is equipped with the (quasi)-norm 
\begin{equation*}
\| f \|_{L^p} = \left( \int_{\R^n} \big| f(x) \big|^p dx \right)^{1/p} 
\end{equation*}
for $0 < p < \infty$. 
If $p = \infty$, $\| f \|_{\infty} = \textrm{ess}\sup_{x\in\R^n} |f(x)|$. 
Moreover, for a compact subset $\Omega \subset \R^n$, 
$L^p_\Omega 
= L^p_\Omega(\R^n) 
= \{ f \in L^p (\R^n) \cap \calS^\prime (\R^n) : \supp ( \calF f ) \subset \Omega \}$. 
For $0 < q \leq \infty$,
we denote by $\ell^q$ the set of all complex number sequences $\{ a_k \}_{ k \in \Z^n }$ 
such that
\begin{equation*}
\| \{ a_k \}_{ k\in\Z^n } \|_{ \ell^q } 
=
\left( \sum_{ k \in \Z^n }  | a_k  |^q \right)^{ 1 / q } < \infty,
\end{equation*}
if $q < \infty$, and 
$\| \{ a_k \}_{ k\in\Z^n } \|_{ \ell^\infty } = \sup_{k \in \Z^n} |a_k| < \infty$ 
if $q = \infty$.
For the sake of simplicity, we will write $ \| a_k  \|_{ \ell^q } $
instead of the more correct notation $ \| \{ a_k \}_{ k\in\Z^n } \|_{ \ell^q } $.
For a function space $X$, 
we denote by $\mathcal{L}(X)$ the space of all bounded linear operators on $X$. 
We end this subsection with stating the following lemmas from \cite{triebel 1983}.

\begin{proposition} [{\cite[Section 1.5.3]{triebel 1983}}]
\label{convolution 0<p<1}
Let $0 < p \leq 1$. Then we have
\begin{equation*}
\| f \ast g \|_{L^p} 
\leq 
C R^{n(1/p-1)} \| f \|_{L^p} \| g \|_{L^p}
\end{equation*}
for all $f,g \in L^p_\Omega$,
where $\Omega = \{ x \in \R^n : | x - x_0 | \leq R \}$ 
and the constant $C > 0$ is independent of $x_0$ and $R$. 
\end{proposition}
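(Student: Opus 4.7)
The plan is to reduce the proposition to a ``unit case'' $x_0 = 0$, $R = 1$ by exploiting modulation and dilation symmetries, and then to exploit the $p$-subadditivity $|a + b|^p \leq |a|^p + |b|^p$ (valid for $0 < p \leq 1$) combined with the discretization of band-limited $L^p$ functions via a Plancherel--Polya type sampling argument. The essential point is that for band-limited $f$, the $L^p$ norm is comparable to an $\ell^p$ norm of sampled values, which allows the non-convex case $p < 1$ to be handled by the trivial $\ell^p$-convolution estimate.

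First I would reduce to $x_0 = 0$. Setting $\tilde f(x) = e^{-i x_0 \cdot x} f(x)$ and similarly $\tilde g$, one checks $|\tilde f| = |f|$ (so $L^p$ norms are preserved), $\supp \calF \tilde f \subset B(0, R)$, and the identity
\begin{equation*}
\tilde f \ast \tilde g(x) = e^{-i x_0 \cdot x} (f \ast g)(x),
\end{equation*}
which preserves $L^p$ norms of the convolution as well. Next I would reduce to $R = 1$ by dilation: with $F(x) = f(x/R)$ and $G(x) = g(x/R)$, the Fourier supports lie in $B(0,1)$, $\|F\|_{L^p} = R^{n/p}\|f\|_{L^p}$, and $F \ast G(x) = R^n (f \ast g)(x/R)$. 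Unwinding the scaling shows that the desired bound $\|f \ast g\|_{L^p} \lesssim R^{n(1/p - 1)}\|f\|_{L^p}\|g\|_{L^p}$ is equivalent to the same bound with $R = 1$ for $F, G$.

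For the unit case, the main technical step is a sampling/reconstruction argument. Choose $\psi \in \calS$ with $\widehat\psi \equiv 1$ on $B(0,1)$ and rapidly decreasing (oversampling so that the reconstruction kernel lies in Schwartz class). Then for any $f \in L^p_{B(0,1)}$ one has the Plancherel--Polya equivalence and reconstruction
\begin{equation*}
\|f\|_{L^p}^p \sim \sum_{k \in \Z^n} |f(k)|^p, \qquad f(x) = \sum_{k \in \Z^n} f(k)\, \psi(x - k),
\end{equation*}
and similarly for $g$. Substituting,
\begin{equation*}
(f \ast g)(x) = \sum_{m \in \Z^n} c_m \, (\psi \ast \psi)(x - m), \qquad c_m = \sum_{k + l = m} f(k) g(l).
\end{equation*}
Applying $p$-subadditivity to the outer sum and then to the convolution defining $c_m$, and using that $\psi \ast \psi \in \calS \subset L^p$, I would conclude
\begin{equation*}
\|f \ast g\|_{L^p}^p \lesssim \sum_m |c_m|^p \leq \sum_m \sum_{k + l = m} |f(k)|^p |g(l)|^p = \Big(\sum_k |f(k)|^p\Big)\Big(\sum_l |g(l)|^p\Big) \sim \|f\|_{L^p}^p \|g\|_{L^p}^p.
\end{equation*}

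The main obstacle is precisely the unavailability of Young's inequality when $p < 1$; any naive attempt via Minkowski's integral inequality fails. The discretization via sampling is what converts the continuous convolution into a discrete one, where $p$-subadditivity becomes a sharp tool. The delicate part of the argument is verifying the Plancherel--Polya sampling equivalence in the range $0 < p \leq 1$, which requires choosing the reconstruction kernel $\psi$ with enough decay (depending on $p$) so that the tail contributions are summable; this is a standard but nontrivial piece of the theory of Besov-type function spaces, and is the real content underlying Triebel's statement.
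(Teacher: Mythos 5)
The paper gives no proof of Proposition \ref{convolution 0<p<1} at all: it is quoted directly from Triebel's book, where (after the same reduction to $x_0=0$, $R=1$ that you perform) the argument decomposes the convolution integral over unit cubes $Q_k$, bounds $\sup_{y\in Q_k}|f(x-y)||g(y)|$ by a product of Peetre maximal functions, and closes with $p$-subadditivity together with the maximal inequality of Proposition \ref{maximal inequality}. Your route is correct but genuinely different in the middle step: you replace the maximal-function bookkeeping by an exact Shannon-type reconstruction $f=\sum_k f(k)\psi(\cdot-k)$, which converts the convolution into a discrete one where $\ell^p$-subadditivity is trivial. Two points should be made explicit for your argument to close. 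First, for the reconstruction formula it is not enough that $\widehat\psi\equiv1$ on $B(0,1)$; you also need $\supp\widehat\psi$ to avoid the aliasing translates of the band, e.g.\ $\supp\widehat\psi\subset B(0,2\pi-1)$, so that Poisson summation yields $\widehat\psi\cdot\sum_{j\in\Z^n}\widehat f(\cdot+2\pi j)=\widehat f$ (your word ``oversampling'' gestures at this, but the support condition is what makes the identity true under the paper's Fourier normalization, whose dual lattice is $2\pi\Z^n$). Second, the only nontrivial half of the Plancherel--P\'olya equivalence you actually use is $\sum_k|f(k)|^p\lesssim\|f\|_{L^p}^p$, and this follows from Proposition \ref{maximal inequality}: for $z\in Q_k$ one has $|f(k)|\leq(1+|k-z|)^{n/r}\sup_y|f(z-y)|/(1+|y|)^{n/r}\lesssim f^{*}(z)$, hence $|f(k)|^p\lesssim\int_{Q_k}(f^{*})^p$, and summing over $k$ gives the claim. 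With these two points supplied (and the observation that $\ell^p\hookrightarrow\ell^1$ for $p\leq1$ justifies all interchanges of sums and integrals, since band-limited $L^p$ functions are smooth and the samples are absolutely summable), your proof is complete; the trade-off relative to Triebel's is that you import the sampling theorem in exchange for not having to carry maximal functions through the convolution.
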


\begin{proposition}[{\cite[Theorem 1.4.1 (i) and Theorem 1.6.2]{triebel 1983}}] 
\label{maximal inequality}
Let $0 < p \leq \infty$. If $0 < r < p$, then we have
\begin{equation*}
\left\| \sup_{y \in \R^n } \frac{ | f (x-y) | }{ 1 + | R y |^{n/r} } \right\|_{L^p(\R^n_x ) } 
\leq
C \| f \|_{L^p}
\end{equation*}
for all $f \in L^p_\Omega$, 
where $\Omega = \{ x \in \R^n : | x - x_0 | \leq R \}$ 
and the constant $C>0$ is independent of $x_0$ and $R$.
\end{proposition}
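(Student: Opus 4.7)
The plan is to follow Peetre's classical band-limited maximal function technique: reduce to a normalized setting, establish a pointwise bound by the Hardy--Littlewood maximal operator, and then appeal to the maximal theorem.

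\textbf{Reduction to $x_0 = 0$, $R = 1$.} First I would modulate and rescale. Setting $g(x) := e^{-i x_0 \cdot x} f(x)$ gives $|g| = |f|$ and $\widehat g(\xi) = \widehat f(\xi + x_0)$, so $\supp \widehat g \subset B(0,R)$; this removes the dependence on $x_0$. A subsequent dilation by $R$ transforms the problem to the case when $\widehat f$ is supported in the closed unit ball, with both the weight $1+|Ry|^{n/r}$ and the $L^p$ norm rescaling consistently, so that the constant is independent of $R$.

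\textbf{Pointwise Peetre bound.} In the normalized setting, I aim to prove
$$\sup_{y\in\R^n} \frac{|f(x-y)|}{(1+|y|)^{n/r}} \lesssim \bigl(M(|f|^r)(x)\bigr)^{1/r},$$
where $M$ is the Hardy--Littlewood maximal operator. The crux is the Plancherel--Polya-type mean-value inequality
$$|f(z)|^r \lesssim \int_{B(z,1)} |f(w)|^r \, dw \qquad (z \in \R^n),$$
valid for any $0 < r < \infty$ and any $f$ band-limited to $B(0,1)$. Taking this for granted, the inclusion $B(x-y,1) \subset B(x, |y|+1)$ gives
$$|f(x-y)|^r \lesssim \int_{B(x,|y|+1)} |f(w)|^r \, dw \lesssim (1+|y|)^n \, M(|f|^r)(x),$$
and raising to the power $1/r$ and dividing by $(1+|y|)^{n/r}$ yields the desired pointwise estimate. (The weight $1 + |y|^{n/r}$ appearing in the statement is comparable to $(1+|y|)^{n/r}$.)

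\textbf{Taking $L^p$ norms.} Applying $\|\cdot\|_{L^p}$ to the pointwise bound, the right-hand side becomes
$$\bigl\|\bigl(M(|f|^r)\bigr)^{1/r}\bigr\|_{L^p} = \bigl\|M(|f|^r)\bigr\|_{L^{p/r}}^{1/r}.$$
Since $r < p$ we have $p/r > 1$, so the Hardy--Littlewood maximal theorem yields $\|M(|f|^r)\|_{L^{p/r}} \lesssim \||f|^r\|_{L^{p/r}} = \|f\|_{L^p}^{r}$, which completes the proof. The endpoint case $p = \infty$ is immediate from the pointwise bound together with $M(|f|^r)(x) \leq \|f\|_{L^\infty}^r$.

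\textbf{Main obstacle.} The chief technical point is the mean-value inequality for small $r$ (in particular $0 < r < 1$), where $|f|^r$ is not differentiable in the usual sense. I would establish it by noting that because $\widehat f$ has compact support, $f$ extends to an entire function of exponential type, so $|f|^r$ is plurisubharmonic, and the standard sub-mean-value inequality over $B(z,1)$ applies. Alternatively, choosing a reproducing function $\phi \in \calS$ with $\widehat\phi = 1$ on $B(0,1)$, one has $f = f * \phi$, and the rapid decay of $\phi$ together with a splitting of $\R^n$ into unit cubes yields a discrete version of the bound (using the $\ell^r$ quasi-triangle inequality for $r \leq 1$), which upon integration reproduces the integral mean-value form. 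Once this inequality is secured, the remaining steps are routine.
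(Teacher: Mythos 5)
This proposition is quoted in the paper directly from Triebel's book without proof, and your argument is precisely the standard Peetre maximal-function proof of that cited result: normalize to $x_0=0$, $R=1$, dominate the Peetre maximal function pointwise by $\bigl(M(|f|^r)\bigr)^{1/r}$ via a Plancherel--Polya mean-value inequality, and finish with the Hardy--Littlewood maximal theorem in $L^{p/r}$ with $p/r>1$. The argument is correct; the only point deserving a little more care is the mean-value inequality for $0<r<1$, which you rightly isolate --- the sub-mean-value property of the plurisubharmonic function $|F|^r$ is taken over balls in $\C^n$ rather than $\R^n$, so one must additionally use the exponential-type bound in the imaginary directions (or simply run your second, reproducing-kernel argument with $f=f\ast\phi$, which is essentially what Triebel does).
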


\subsection{$\alpha$-modulation spaces} \label{sec2.2}
We give the definition of $\alpha$-modulation spaces and their basic properties. 
Let 
$C > 1 $ be a constant 
which depends on the space dimension and $\alpha \in [0,1)$. 
Suppose that a sequence of Schwartz functions $\{ \eta_k^\alpha \}_{k\in\Z^n}$ satisfies that
\begin{itemize} 
\item 
	$\supp \eta_k^\alpha \subset
	\left\{ \xi \in \R^n : 
	\left| \xi - \langle k \rangle^{\alpha/(1-\alpha)}k \right| \leq C \langle k \rangle^{\alpha/(1-\alpha)} 	
	\right\}$;

\item 
	$\left| \partial^\beta_\xi \eta_k^\alpha (\xi) \right| 
	\leq C'_\beta \langle k \rangle^{-|\beta| \alpha/(1-\alpha)}$ 
for every multi-index $\beta \in \Z^n_+$;

\item 
	$\displaystyle{\sum_{k\in\Z^n}} \eta_k^\alpha (\xi) = 1$
for any $\xi \in \R^n$.

\end{itemize} 
Then, for $0< p, q \leq \infty$, $s\in\R$, and $\alpha\in[0,1)$, 
we denote the $\alpha$-modulation space $M_{p,q}^{s,\alpha} $ by
\begin{equation*}
M_{p,q}^{s,\alpha} (\R^n) 
= 
\left\{ 
f\in{\calS}^\prime(\R^n) : 
\left\| f \right\|_{M^{s, \alpha}_{p,q}} 
= 
\left\| \langle k \rangle^{s / (1-\alpha)} 
\left\| \eta_k^\alpha(D) f \right\|_{L^p} 
\right\|_{\ell^q(\Z^n_k)}
< +\infty \right\}.
\end{equation*}
See Borup and Nielsen \cite{borup nielsen JMAA 2006, borup nielsen AM 2006} 
for the abstract definition including the end point case $\alpha = 1$.

We remark that $M_{p,q}^{s,\alpha}$ is a quasi-Banach space (a Banach space if $1 \leq  p,q \leq \infty$)
and $\calS \subset M_{p,q}^{s,\alpha} \subset \calS^\prime$.
In particular, $\calS$ is dense in $M_{p,q}^{s,\alpha}$ for $0 < p,q < \infty$
(see Borup and Nielsen \cite{borup nielsen MN 2006}). 
Moreover, if we choose different decompositions satisfying the conditions above,
they determine equivalent (quasi)-norms of $\alpha$-modulation spaces,
so that the definition of $\alpha$-modulation spaces is 
independent of the choice of the sequence $\{ \eta_k^\alpha \}_{k\in\Z^n}$. 
Next, we recall some basic properties of $\alpha$-modulation spaces.

\begin{proposition} \label{lift op}
Let $0 < p,q \leq \infty$, $s, t \in\R$ and $0\leq \alpha < 1$. Then the mapping $ (I-\Delta)^{t/2} : M_{p,q}^{s,\alpha} \hookrightarrow M_{p,q}^{s-t,\alpha}$ is isomorphic.
\end{proposition}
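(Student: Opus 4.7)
The plan is to establish the one-sided inequality
$\|(I-\Delta)^{t/2} f\|_{M^{s-t,\alpha}_{p,q}} \lesssim \|f\|_{M^{s,\alpha}_{p,q}}$
for all $f \in \calS$; applying the same estimate with $-t$ in place of $t$ to $(I-\Delta)^{t/2} f$ then supplies the reverse inequality, so $(I-\Delta)^{t/2}$ is a quasi-Banach isomorphism $M^{s,\alpha}_{p,q} \to M^{s-t,\alpha}_{p,q}$. The driving observation is geometric: on $\supp \eta_k^\alpha$ one has $\langle \xi \rangle \sim \langle k \rangle^{1/(1-\alpha)}$, so the Bessel multiplier $m_t(\xi) := (1+|\xi|^2)^{t/2}$ is of size $\langle k \rangle^{t/(1-\alpha)}$ there. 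I will fix $L \in \Z_+$ large enough that $\widetilde\eta_k^\alpha := \sum_{|l-k|\leq L} \eta_l^\alpha$ equals $1$ on $\supp \eta_k^\alpha$, write
\begin{equation*}
\eta_k^\alpha(D)(I-\Delta)^{t/2} f = \calF^{-1}[\eta_k^\alpha m_t] \ast \widetilde\eta_k^\alpha(D) f,
\end{equation*}
and reduce the whole argument to the uniform-in-$k$ multiplier bound
\begin{equation*}
\|\eta_k^\alpha(D)(I-\Delta)^{t/2} f\|_{L^p} \lesssim \langle k \rangle^{t/(1-\alpha)}\|\widetilde\eta_k^\alpha(D) f\|_{L^p}.
\end{equation*}

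To prove that bound I rescale. Setting $R := \langle k \rangle^{\alpha/(1-\alpha)}$ and $c_k := R k$, introduce
\begin{equation*}
g_k(\zeta) := \langle k \rangle^{-t/(1-\alpha)}\, m_t(R\zeta + c_k)\, \eta_k^\alpha(R\zeta + c_k).
\end{equation*}
The $\alpha$-scaling of the support radius $R$ dovetails with the pointwise estimate $|\partial^\mu m_t(\xi)| \lesssim \langle \xi \rangle^{t - |\mu|}$ so that each $\partial^\gamma g_k$ acquires an extra factor $\langle k \rangle^{-|\gamma|}$; hence $g_k$ is supported in a fixed ball and all its derivatives are bounded uniformly in $k$, so $\calF^{-1} g_k$ lies in $L^1 \cap L^p$ with norms bounded independently of $k$. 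Combining this with the scaling identity
$\calF^{-1}[\eta_k^\alpha m_t](x) = \langle k \rangle^{t/(1-\alpha)} R^n e^{i c_k \cdot x} (\calF^{-1} g_k)(Rx)$,
I invoke Young's convolution inequality when $1 \leq p \leq \infty$ and Proposition \ref{convolution 0<p<1} when $0 < p < 1$: in the latter case the factor $R^{n(1/p-1)}$ produced by that proposition cancels exactly the $R^{n(1-1/p)}$ arising in $\|\calF^{-1}[\eta_k^\alpha m_t]\|_{L^p}$, so both regimes yield the target $\langle k \rangle^{t/(1-\alpha)}\|\widetilde\eta_k^\alpha(D) f\|_{L^p}$.

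With the multiplier estimate in hand I will control $\|\widetilde\eta_k^\alpha(D) f\|_{L^p}$ by a finite sum of the $\|\eta_l^\alpha(D) f\|_{L^p}$ with $|l - k| \leq L$ (via the $L^{\min(p,1)}$ quasi-triangle inequality), multiply by $\langle k \rangle^{(s-t)/(1-\alpha)}$, and take the $\ell^q$ quasi-norm in $k$; the product $\langle k \rangle^{(s-t)/(1-\alpha)} \cdot \langle k \rangle^{t/(1-\alpha)} = \langle k \rangle^{s/(1-\alpha)}$ reproduces the weight in $\|f\|_{M^{s,\alpha}_{p,q}}$, and since the index shift $l \mapsto l + k$ changes the $\ell^q$ norm by at most a constant the right-hand side collapses to $\|f\|_{M^{s,\alpha}_{p,q}}$, as required. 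The main technical obstacle will be the rescaling analysis itself: one must verify carefully the $k$-uniform derivative bounds on $g_k$ and track the powers of $R$ so that the $0 < p < 1$ case closes with exactly the same $\langle k \rangle^{t/(1-\alpha)}$ factor as the Young case.
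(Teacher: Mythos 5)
Your proposal is correct and is essentially the argument the paper points to: the paper omits the proof, citing the standard lifting argument for Besov spaces in Triebel (Section 2.3.8) and the explicit version in Kato's appendix, and that argument is exactly your scheme — fatten the cutoff, rescale the localized Bessel multiplier to a fixed ball so that $\langle\xi\rangle\sim\langle k\rangle^{1/(1-\alpha)}$ on $\supp\eta_k^\alpha$ yields uniformly bounded derivatives, and use Young's inequality for $p\geq 1$ and Proposition~\ref{convolution 0<p<1} for $0<p<1$, where the powers of $R=\langle k\rangle^{\alpha/(1-\alpha)}$ cancel as you indicate. The one detail worth writing out is that the fattened support $\bigcup_{|l-k|\leq L}\supp\eta_l^\alpha$ is indeed contained in a ball of radius $\lesssim_L\langle k\rangle^{\alpha/(1-\alpha)}$ about $\langle k\rangle^{\alpha/(1-\alpha)}k$ (so that Proposition~\ref{convolution 0<p<1} applies with $R'\sim R$), which follows from $|\langle l\rangle^{A}l-\langle k\rangle^{A}k|\lesssim_L\langle k\rangle^{A}$ for $|l-k|\leq L$.
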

The proof of Proposition \ref{lift op} is similar to that for Besov spaces in \cite[Section 2.3.8]{triebel 1983}. 
One can find the explicit proof in \cite[Appendix A]{kato 2017}.

\begin{proposition} [{\cite[Proposition 6.1]{han wang 2014}}]
\label{equivalent norm 0}
Let $0 < p,q \leq \infty$, $s \in \R$ and $0\leq\alpha<1$. 
Let a smooth radial bump function $\varrho$ satisfy that $\varrho (\xi) = 1$ on $|\xi| < 1$, 
and $\varrho (\xi) = 0$ on $|\xi| \geq 2$. 
Then, we have
\begin{equation*}
\| f \|_{M_{p,q}^{s,\alpha}} 
\sim 
\left\| \langle k \rangle^{s/(1-\alpha)} \left\| \varrho_k^\alpha(D) f \right\|_{L^p} \right\|_{\ell^q (\Z^n_k)}
\end{equation*} 
for all $f \in M_{p,q}^{s,\alpha}$,
where
\begin{equation*}
\varrho_k^\alpha(\xi) 
= 
\varrho \left( \frac{\xi - \langle k \rangle^{ \alpha/(1-\alpha)} k}{C\langle k \rangle^{ \alpha/(1-\alpha)}} \right).
\end{equation*}
Here, the constant $C > 1$ is the same as in the definition of the sequence $\{ \eta_k^\alpha \}_{k\in\Z^n}$.
\end{proposition}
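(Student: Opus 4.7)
The plan is to prove the equivalence by running the standard Littlewood–Paley-type argument adapted to the inhomogeneous $\alpha$-decomposition, exploiting two key facts: (i) the bump $\varrho_k^\alpha$ equals $1$ on (a slight thickening of) $\supp \eta_k^\alpha$, and (ii) the family $\{\supp \varrho_k^\alpha\}$ has uniformly bounded overlap with the family $\{\supp \eta_j^\alpha\}$, with $\langle j\rangle \sim \langle k\rangle$ whenever their supports intersect. Both facts follow directly from the geometric description of the cover centered at $\langle k\rangle^{A}k$ with radius proportional to $\langle k\rangle^A$, where $A = \alpha/(1-\alpha)$.

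For the inequality $\|f\|_{M^{s,\alpha}_{p,q}} \lesssim \|\langle k\rangle^{s/(1-\alpha)} \|\varrho_k^\alpha(D) f\|_{L^p}\|_{\ell^q}$, I would first note that $\varrho_k^\alpha \equiv 1$ on $\supp \eta_k^\alpha$ (after, if necessary, slightly enlarging the constant $C$ in the definition of $\varrho$, which does not affect the norm), so that $\eta_k^\alpha(D)f = \eta_k^\alpha(D) \varrho_k^\alpha(D)f$. Since $\varrho_k^\alpha(D) f$ has Fourier support in a ball of radius $R_k \sim \langle k\rangle^{A}$, writing $\eta_k^\alpha(D)\varrho_k^\alpha(D) f = (\calF^{-1}\eta_k^\alpha) \ast \varrho_k^\alpha(D) f$ and applying Young's inequality when $p \ge 1$ or Proposition \ref{convolution 0<p<1} when $0 < p \le 1$, we obtain
\begin{equation*}
\|\eta_k^\alpha(D)\varrho_k^\alpha(D) f\|_{L^p} \lesssim \|\calF^{-1}\eta_k^\alpha\|_{L^{\min(1,p)}} R_k^{n(1/p - 1)_+} \|\varrho_k^\alpha(D) f\|_{L^p}.
\end{equation*}
A change of variables based on the affine rescaling $\xi \mapsto \langle k\rangle^A k + \langle k\rangle^A \xi$ shows that the scaled functions form a bounded subset of $\calS(\R^n)$, so $\|\calF^{-1}\eta_k^\alpha\|_{L^{\min(1,p)}} R_k^{n(1/p-1)_+}$ is bounded uniformly in $k$. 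Taking the weighted $\ell^q$-norm in $k$ gives the first inequality.

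For the reverse inequality, decompose
\begin{equation*}
\varrho_k^\alpha(D) f = \sum_{j \in \Lambda_k} \varrho_k^\alpha(D) \eta_j^\alpha(D) f, \qquad \Lambda_k = \bigl\{j \in \Z^n : \supp \varrho_k^\alpha \cap \supp \eta_j^\alpha \neq \emptyset\bigr\}.
\end{equation*}
The geometric facts noted above give $\#\Lambda_k \le N_0$ independently of $k$ and $\langle j\rangle \sim \langle k\rangle$ for $j \in \Lambda_k$. The same multiplier estimate used above (applied to $\varrho_k^\alpha(D)\eta_j^\alpha(D) f$, which is frequency-localized in a ball of radius $\sim \langle k\rangle^A$) yields $\|\varrho_k^\alpha(D)\eta_j^\alpha(D) f\|_{L^p} \lesssim \|\eta_j^\alpha(D) f\|_{L^p}$ with a constant independent of $k$ and $j$. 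Summing using the quasi-triangle inequality in $L^p$ and then in $\ell^q$ (which costs at most the constant $N_0^{1/\min(p,q,1)}$), together with the comparability $\langle k\rangle^{s/(1-\alpha)} \sim \langle j\rangle^{s/(1-\alpha)}$ on $\Lambda_k$, completes the proof.

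The main obstacle is the case $0 < p < 1$, where Young's inequality is unavailable: here Proposition \ref{convolution 0<p<1} forces the appearance of the geometric factor $R_k^{n(1/p-1)}$, and one must verify that the $L^p$-norm of $\calF^{-1}\eta_k^\alpha$ scales precisely so as to absorb this factor uniformly in $k$. The scaling computation based on $\xi \mapsto \langle k\rangle^A (\xi + k)$, together with the uniform Schwartz bounds on the rescaled bumps implied by the differential estimates defining the cover, is what makes the argument robust across the full range $0 < p,q \le \infty$.
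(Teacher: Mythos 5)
Your argument is correct; note that the paper itself does not prove this proposition but simply cites \cite[Proposition 6.1]{han wang 2014}, so there is no in-paper proof to compare against. Your two-sided scheme (use $\varrho_k^\alpha \equiv 1$ on $\supp\eta_k^\alpha$ for one direction, bounded overlap for the other, with Young's inequality replaced by Proposition \ref{convolution 0<p<1} and the scaling cancellation $\|\calF^{-1}\eta_k^\alpha\|_{L^p}\,R_k^{n(1/p-1)} \lesssim 1$ when $0<p<1$) is the standard and correct route. Two small remarks. First, the one step you describe as following ``directly'' from the geometry --- that $\supp\varrho_k^\alpha \cap \supp\eta_j^\alpha \neq \varnothing$ forces $|k-j|\lesssim 1$ --- is not immediate: it requires the inequality $(\langle k\rangle^A+\langle j\rangle^A)|k-j| \lesssim |\langle k\rangle^A k - \langle j\rangle^A j|$, which is exactly Lemma \ref{k and m} of the paper and whose proof needs a genuine case analysis; you should cite or reproduce it rather than treat it as obvious. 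Second, the hedge about enlarging $C$ in the first direction is unnecessary (and would change the statement being proved): since $\varrho_k^\alpha = 1$ on the open ball of radius $C\langle k\rangle^A$ and $\supp\eta_k^\alpha$ lies in the closed ball, the continuous function $\eta_k^\alpha(1-\varrho_k^\alpha)$ is supported in a sphere and hence vanishes identically, so $\eta_k^\alpha\varrho_k^\alpha = \eta_k^\alpha$ exactly.
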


\begin{lemma} \label{maximal inequality for alpha modulation}
Let $0 < p \leq \infty$ and $0 \leq \alpha < 1 $. If $0 < r < p$, then we have
\begin{equation} \label{max ineq alpha}
\left\| \sup_{y \in \R^n} 
\frac{ \left| \left[ \eta_k^\alpha(D) f \right] (x-y) \right| }
		{ 1 + \left( \langle k \rangle^{ \alpha/(1-\alpha)} | y | \right)^{n/r} } \right\|_{L^p(\R^n_x)} 
\lesssim 
\left\| \eta_k^\alpha(D) f  \right\|_{L^p}
\end{equation}
for all $f \in \calS^\prime (\R^n) $ and all $k \in \Z^n$.
\end{lemma}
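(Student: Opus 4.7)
The plan is to reduce inequality \eqref{max ineq alpha} directly to Proposition \ref{maximal inequality} by identifying the Fourier localization of $\eta_k^\alpha(D)f$ and then rescaling the constant that appears in the denominator. If $\|\eta_k^\alpha(D)f\|_{L^p}=\infty$ the inequality is trivial, so I may assume this norm is finite.

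First I would observe that, by the first defining property of the sequence $\{\eta_k^\alpha\}_{k\in\Z^n}$, the distribution $\eta_k^\alpha(D)f$ has Fourier support contained in the ball
\begin{equation*}
\Omega_k = \left\{ \xi \in \R^n : \left|\xi - \langle k\rangle^{\alpha/(1-\alpha)}k\right| \leq R_k \right\}, \qquad R_k := C\langle k\rangle^{\alpha/(1-\alpha)},
\end{equation*}
so that $\eta_k^\alpha(D)f \in L^p_{\Omega_k}$. Since the constant in Proposition \ref{maximal inequality} is independent of the center and radius of the ball $\Omega$, I may apply that proposition with $R=R_k$ and $x_0 = \langle k\rangle^{\alpha/(1-\alpha)}k$ to obtain
\begin{equation*}
\left\| \sup_{y\in\R^n} \frac{\left| [\eta_k^\alpha(D)f](x-y) \right|}{1+|R_k y|^{n/r}} \right\|_{L^p(\R^n_x)} \lesssim \|\eta_k^\alpha(D)f\|_{L^p},
\end{equation*}
with an implicit constant independent of $k$.

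It then remains to compare the denominator $1+|R_k y|^{n/r} = 1 + C^{n/r}(\langle k\rangle^{\alpha/(1-\alpha)}|y|)^{n/r}$ produced by the proposition with the denominator $1 + (\langle k\rangle^{\alpha/(1-\alpha)}|y|)^{n/r}$ demanded in \eqref{max ineq alpha}. Since $C \geq 1$, one has the elementary estimate
\begin{equation*}
1 + |R_k y|^{n/r} \leq C^{n/r}\bigl( 1 + (\langle k\rangle^{\alpha/(1-\alpha)}|y|)^{n/r} \bigr),
\end{equation*}
so the left-hand side of \eqref{max ineq alpha} is pointwise at most $C^{n/r}$ times the quantity already controlled, and the lemma follows. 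In this sense I do not foresee any real obstacle: the result is a transcription of Proposition \ref{maximal inequality} to the $\alpha$-dependent frequency scales set by $\{\eta_k^\alpha\}_{k\in\Z^n}$, and the only subtlety is keeping track of the harmless constant $C$ built into the support condition on $\eta_k^\alpha$.
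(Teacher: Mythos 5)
Your proposal is correct and follows the same route as the paper: identify $\eta_k^\alpha(D)f$ as an element of $L^p_{\Omega_k}$ with $\Omega_k$ the ball of radius $C\langle k\rangle^{\alpha/(1-\alpha)}$ and invoke Proposition \ref{maximal inequality}, whose constant is uniform in the center and radius. The only difference is that you spell out the harmless adjustment for the constant $C$ in the denominator, which the paper leaves implicit.
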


\begin{proof}[Proof of Lemma \ref{maximal inequality for alpha modulation}] 
It follows from the definition of the decomposition $\{ \eta_k^\alpha \}_{k\in\Z^n}$ that 
\begin{equation*}
\supp \calF\left[ \eta_k^\alpha(D) f \right] 
\subset 
\left\{ 
\xi \in \R^n
: 
\left| \xi - \langle k \rangle^{ \alpha/(1-\alpha) } k \right| \leq C \langle k \rangle^{ \alpha/(1-\alpha) }
\right\},
\end{equation*} 
so that Lemma \ref{maximal inequality for alpha modulation} follows 
from Proposition \ref{maximal inequality}.
\end{proof}

\begin{remark}
Taking the $\ell^q(\Z^n_k)$ (quasi)-norm of both sides of \eqref{max ineq alpha}, 
we have for $0 \leq \alpha < 1 $, $0 < p,q \leq \infty$ and $0 < r < p$
\begin{equation*}
\left\| \left\| \sup_{y \in \R^n} \frac{ \left| \left[ \eta_k^\alpha(D) f \right] (x-y) \right| }{  1 + \left( \langle k \rangle^{ \alpha/(1-\alpha) } | y | \right)^{n/r} } \right\|_{L^p(\R^n_x)} \right\|_{\ell^q(\Z^n_k)} \lesssim \left\| f  \right\|_{ M_{p,q}^{0,\alpha} }.
\end{equation*}
\end{remark}

We end this subsection by stating the definition of modulation spaces,
which arise as special $\alpha$-modulation spaces for the choice $\alpha=0$.
Another definition and basic properties of modulation spaces can be found 
in \cite{feichtinger 1983, grochenig 2001, kobayashi 2006, kobayashi 2007, wang hudzik 2007}.
Let a sequence of Schwartz functions $\{ \varphi_k \}_{k\in\Z^n}$ satisfy that

\begin{equation*}
\supp \varphi \subset 
\left\{ 	\xi \in \R^n : | \xi | \leq \sqrt{ n } \right\} 
\aand
\sum_{k\in\Z^n} \varphi_k (\xi) = 1 
\textrm{ for any } 
\xi \in \R^n,
\end{equation*}
where $\varphi_k = \varphi(\cdot - k ) $. Then, for $0 < p,q \leq \infty$ and $s \in \R$, we denote the modulation space $M_{ p , q }^s$ by
\begin{equation*}
M_{p,q}^s (\R^n) 
=
\left\{ 
f\in{\calS}^\prime(\R^n) : 
\left\| f \right\|_{M^{s}_{p,q}} 
= 
\Big\| \langle k \rangle^{s} \left\| \varphi_k(D) f \right\|_{L^p} \Big\|_{\ell^q(\Z^n_k)} < +\infty 
\right\}.
\end{equation*}
We finally note that $M_{ p , q }^{s,0} = M_{ p , q }^s$.



\section{Lemmas}\label{sec3}

As stated in the beginning of Section \ref{sec2},
we will use the notation $A = \frac{ \alpha }{ 1 - \alpha }$ 
in the remainder of the paper.

In this section, we prepare some lemmas to use 
in the proof of Theorem \ref{main theorem}. 
As mentioned in the end of Section \ref{sec1}, 
we may assume $\sigma \in \calS(\R^{n}\times\R^{n})$ 
in the following statements. 
We remark that for the partition of unity $\{ \varphi_\ell \}_{\ell \in \Z^n}$ 
used to construct modulation spaces,
it holds that
\begin{equation*}
\sum_{\ell \in \Z^n} \varphi_\ell \left( \xi \right) 
=1 
\textrm{ for any } 
\xi \in \R^n 
\quad\Longrightarrow\quad
\sum_{\ell \in \Z^n} \varphi_\ell \left( \frac{\xi}{\langle m \rangle^A } \right) 
=
1 
\textrm{ for any } 
\xi \in \R^n \textrm{ and }  m \in \Z^n.
\end{equation*}
Then, we can decompose the symbols $\sigma$ as
\begin{align*}
\sigma(x,\xi)
= \sum_{ m \in \Z^n} \sigma(x,\xi) \cdot \eta_m^\alpha (\xi)
= \sum_{ \ell,m \in \Z^n} 
\left( \varphi_\ell \left(
	\frac{D_x}{\langle m \rangle^A }
\right) \sigma \right) (x,\xi)
\cdot 
\eta_m^\alpha (\xi) ,
\end{align*}
where $\{ \eta_m^\alpha \}_{m \in \Z^n}$ is the partition of unity
used for defining the $\alpha$-modulation spaces. 
Put
\begin{equation} \label{def of sigma}
\sigma_{\ell,m}(x,\xi) 
= 
\left( \varphi_\ell \left( \frac{D_x}{\langle m \rangle^A } \right) \sigma \right) (x,\xi) 
\cdot 
\eta_m^\alpha (\xi).
\end{equation}
Then, we have $\sigma_{\ell,m} \in \calS(\R^{n}\times\R^{n})$
and also by Proposition \ref{equivalent norm 0}
\begin{align}
\label{teinei}
\begin{split}
\left\| \sigma(X,D) f \right\|_{M_{p,q}^{s,\alpha}} 
&\sim 
\left\| \langle k \rangle^{\frac{s}{1-\alpha}} 
\left\| \varrho_k^\alpha(D) \left[ \sigma(X,D) f \right] \right \|_{L^p} 
\right\|_{\ell^q (\Z^n_k)}
\\
&=  
\left\| \langle k \rangle^{\frac{ s }{1-\alpha}} 
\left\| \sum_{\ell,m \in \Z^n} \varrho_k^\alpha(D) \left[ \sigma_{\ell,m} (X,D) f \right] \right\|_{L^p} 
\right\|_{\ell^q (\Z^n_k)}.
\end{split}
\end{align} 
In the following, we investigate some properties of $\varrho_k^\alpha(D) \left[ \sigma_{\ell,m} (X,D) f \right]$. 

We first determine the relations among $k,\ell,m \in \Z^n$
by considering the support of $ \calF [\sigma_{\ell,m} (X,D) f ] $.

\begin{lemma} \label{region}
It holds that
\begin{equation*}
\supp \calF [\sigma_{\ell,m} (X,D) f ] 
\subset 
\left\{ \zeta \in \R^n : 
\left| \zeta - \langle m \rangle^A (\ell + m ) \right| \leq (C+ \sqrt{ n } )\langle m \rangle^A 
\right\}
\end{equation*}
for all $f \in \calS (\R^n)$ and all $\ell,m \in \Z^n$.
Here, $C$ is the constant in the definition of the functions $\{ \eta_k^\alpha \}_{k\in\Z^n}$.

Furthermore, it holds that
\begin{equation*}
\supp \varrho_k^\alpha \cap \supp \calF \left[ \sigma_{\ell,m} (X,D) f \right] \neq \varnothing 
\quad\Longrightarrow \quad
| k - m | \lesssim \langle \ell \rangle
\end{equation*}
for all $f \in \calS (\R^n)$ and all $k,\ell,m \in \Z^n$.
\end{lemma}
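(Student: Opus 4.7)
The plan is to prove the two assertions in succession: the first is a direct Fourier-support computation, and the second combines it with a bi-Lipschitz-type property of the map $x \mapsto \langle x \rangle^A x$ on $\R^n$.

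For the first assertion, I would start from the identity
\[
\calF\bigl[\tau(X,D)f\bigr](\zeta) = (2\pi)^{-n}\int \bigl(\calF_x \tau\bigr)(\zeta-\xi,\xi)\, \widehat{f}(\xi)\, d\xi,
\]
valid for $\tau \in \calS(\R^{2n})$ and $f \in \calS(\R^n)$. By the definition \eqref{def of sigma},
\[
\calF_x \sigma_{\ell,m}(\eta,\xi) = \varphi_\ell\!\left(\frac{\eta}{\langle m\rangle^A}\right) \calF_x \sigma(\eta,\xi)\, \eta_m^\alpha(\xi),
\]
whose support in $(\eta,\xi)$ is contained in $\{|\eta - \ell \langle m \rangle^A| \le \sqrt{n}\,\langle m \rangle^A\} \times \{|\xi - \langle m \rangle^A m| \le C\langle m \rangle^A\}$, because $\supp \varphi \subset \{|\cdot|\le \sqrt{n}\}$ and by the support condition on $\eta_m^\alpha$ from Section \ref{sec2.2}. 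Writing $\zeta = \eta + \xi$ and applying the triangle inequality in the integrand yields the stated inclusion, with constant $C + \sqrt{n}$.

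For the second assertion, $\supp \varrho_k^\alpha \subset \{|\zeta - \langle k \rangle^A k| \le 2C\langle k \rangle^A\}$ because $\varrho$ is supported in $\{|\cdot| \le 2\}$. A non-empty intersection with the ball from the first part gives, again by triangle inequality,
\[
\bigl|\langle k \rangle^A k - \langle m \rangle^A m\bigr| \lesssim \langle \ell \rangle \,\max(\langle k \rangle, \langle m \rangle)^A.
\]
To finish, I would invoke the bi-Lipschitz-type lower bound
\[
\bigl|\langle k \rangle^A k - \langle m \rangle^A m\bigr| \gtrsim |k-m|\,\max(\langle k \rangle, \langle m \rangle)^A,
\]
which encodes the admissibility of the $\alpha$-covering from Section \ref{sec2.2}; combining the two displayed inequalities yields $|k-m| \lesssim \langle \ell \rangle$.

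The main obstacle is establishing the bi-Lipschitz estimate for the map $\Phi(x) := \langle x \rangle^A x$. I would handle this by case analysis on the relative size of $\langle k \rangle$ and $\langle m \rangle$. If one dominates, say $\langle k \rangle \ge 2\langle m \rangle$ with $|k| \ge 1$, then $|\Phi(k)| \gtrsim \langle k \rangle^{A+1}$ while $|\Phi(m)| \le \langle m \rangle^{A+1} \ll \langle k \rangle^{A+1}$, so $|\Phi(k)-\Phi(m)| \gtrsim \langle k \rangle^{A+1} \gtrsim |k-m|\langle k \rangle^A$; the small range of $|k|$ is handled by a direct size estimate. In the remaining regime $\langle k \rangle \sim \langle m \rangle$, the pointwise inequality $(\nabla \Phi(x) v) \cdot v \ge \langle x \rangle^A |v|^2$ (a direct computation from the definition) together with integration along the segment $z_t = (1-t)m + tk$ gives $(\Phi(k) - \Phi(m)) \cdot (k-m) \ge |k-m|^2 \int_0^1 \langle z_t \rangle^A\, dt$; one then checks, by examining the worst-case geometry (where $z_t$ traces a segment passing close to the origin), that the last integral is $\gtrsim \max(\langle k \rangle, \langle m \rangle)^A$. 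All remaining work is routine bookkeeping with the triangle inequality.
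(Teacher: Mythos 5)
Your argument is correct, and for both assertions of the lemma it follows the same route as the paper: the first part is exactly the paper's computation of $\calF[\sigma_{\ell,m}(X,D)f]$ (convolution of the $\eta$-support of $\varphi_\ell(\cdot/\langle m\rangle^A)$ with the $\xi$-support of $\eta_m^\alpha$), and the second part is the same triangle-inequality reduction to a lower bound for $|\langle k\rangle^A k-\langle m\rangle^A m|$. The one place where you genuinely diverge is the proof of that lower bound, which the paper isolates as Lemma \ref{k and m} and proves by a three-case algebraic argument (the case $|m|<|k|<2|m|$ being handled by squaring $|k-m|\le|\tfrac{\langle k\rangle^A}{\langle m\rangle^A}k-m|$ and manipulating inner products). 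Your alternative in the comparable regime --- the monotonicity inequality $(\nabla\Phi(x)v)\cdot v\ge\langle x\rangle^A|v|^2$ for $\Phi(x)=\langle x\rangle^A x$, integrated along the segment, followed by the observation that $|z_t|\ge\tfrac12\max(|k|,|m|)$ on a set of $t$ of measure at least $\tfrac14$ (since $|k-m|\le 2\max(|k|,|m|)$) --- is sound, and in fact it yields the bi-Lipschitz lower bound $(\Phi(k)-\Phi(m))\cdot(k-m)\gtrsim|k-m|^2\max(\langle k\rangle,\langle m\rangle)^A$ uniformly, so with a little care it dispenses with the case analysis altogether; the paper's argument, by contrast, is purely discrete/algebraic and avoids any differentiability discussion for $\Phi$ at the origin. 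The quantities $\max(\langle k\rangle,\langle m\rangle)^A$ and $\langle k\rangle^A+\langle m\rangle^A$ are comparable, so your formulation matches the paper's. The only loose end is the sentence ``one then checks \dots that the last integral is $\gtrsim\max(\langle k\rangle,\langle m\rangle)^A$,'' which you should actually write out as above; as stated it is true and elementary, so this is not a gap.
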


\begin{remark} \label{cor region}
Lemma \ref{region} implies that 
$\varrho_k^\alpha(D) \left[ \sigma_{\ell,m} (X,D) f \right] $ always vanishes 
unless $ | k - m | \lesssim \langle \ell \rangle $ is satisfied.
\end{remark}

Before beginning with the proof of Lemma \ref{region}, we prepare one lemma.

\begin{lemma} \label{k and m}
It holds that
\begin{equation*}
(\langle k \rangle ^A + \langle m \rangle^A ) | k - m | 
\lesssim \left| \langle k \rangle^A k  - \langle m \rangle^A m \right|
\end{equation*}
for all $k,m \in \Z^n$.
\end{lemma}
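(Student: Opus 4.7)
My plan is to reduce the vector inequality to a scalar problem via polar decomposition and the law of cosines. I would assume without loss of generality that $|k| \geq |m|$, and write $k = r_1\omega_1$, $m = r_2\omega_2$ with $r_1 \geq r_2 \geq 0$ and unit vectors $\omega_i$ (choosing $\omega_2$ arbitrarily if $r_2 = 0$); then set $\mu = \omega_1\cdot\omega_2 \in [-1,1]$ and abbreviate $a = (1+r_1)^A r_1$, $b = (1+r_2)^A r_2$, $u = (1+r_1)^A + (1+r_2)^A$. The cosine rule gives
\[
|\langle k\rangle^A k - \langle m\rangle^A m|^2 = a^2 + b^2 - 2ab\mu
\quad\text{and}\quad
u^2|k-m|^2 = u^2\bigl(r_1^2 + r_2^2 - 2r_1 r_2\mu\bigr),
\]
both of which are affine in $\mu\in[-1,1]$. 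Hence the target inequality $|\langle k\rangle^A k - \langle m\rangle^A m|^2 \geq c\,u^2|k-m|^2$ for some $c = c(A) > 0$ reduces to checking just the two endpoints $\mu = \pm 1$.

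At $\mu = 1$ the goal is $(a-b)^2 \geq c u^2 (r_1-r_2)^2$. A direct derivative computation gives $g'(r) = (1+r)^{A-1}(1+(A+1)r) \geq (1+r)^A$ for $g(r) = (1+r)^A r$, so
\[
a-b = \int_{r_2}^{r_1} g'(r)\,dr \geq \frac{1}{A+1}\bigl((1+r_1)^{A+1} - (1+r_2)^{A+1}\bigr).
\]
Combining this with the elementary chord-vs-tangent inequality $(1+r_1)^{A+1} - (1+r_2)^{A+1} \geq (r_1-r_2)(1+r_1)^A$ (which reduces to $t^A \leq 1$ at $t = (1+r_2)/(1+r_1) \in (0,1]$) and the trivial bound $u \leq 2(1+r_1)^A$ yields $a-b \gtrsim (r_1-r_2)u$ with a constant of order $1/(A+1)$.

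At $\mu = -1$ the goal is $a+b \gtrsim u(r_1+r_2)$. Expanding,
\[
u(r_1+r_2) = a + b + (1+r_1)^A r_2 + (1+r_2)^A r_1,
\]
and under $r_2 \leq r_1$ each of the last two terms is at most $(1+r_1)^A r_1 = a$, giving $u(r_1+r_2) \leq 3(a+b)$. With both endpoints verified for a common constant $c$, the affine reduction in $\mu$ from the first paragraph then delivers the lemma.

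The main obstacle I anticipate is precisely the scalar chord-tangent step just described. A direct vector mean value theorem applied to $F(x) = \langle x\rangle^A x$ would not suffice, because when $k$ and $m$ are nearly antipodal the segment $tk+(1-t)m$ can cross the origin, causing any integrated lower bound on $DF$ along the segment to degrade far below $u$. The polar/endpoint decomposition sidesteps this degeneracy by separating the antipodal regime $\mu = -1$ (handled by the cross-term triangle bound) from the parallel regime $\mu = 1$ (handled by the chord-tangent estimate on the scalar profile $g$).
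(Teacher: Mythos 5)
Your proof is correct, and it takes a genuinely different route from the paper's. The paper argues by a three-way case split on the ratio $|k|/|m|$: the extreme case $|k|\ge 2|m|$ is handled by crude size estimates, and the delicate comparable-magnitude case $|m|<|k|<2|m|$ is resolved by the exact vector inequality $|k-m|\le \bigl|\tfrac{\langle k\rangle^A}{\langle m\rangle^A}k-m\bigr|$, proved by expanding squares and using $k\cdot m\le |k|^2$. You instead pass to polar form, observe that both squared norms are affine in the angular parameter $\mu=\omega_1\cdot\omega_2$, and reduce to the two scalar endpoint inequalities at $\mu=\pm1$, which you settle by one-dimensional calculus (the lower bound $g'(r)\ge(1+r)^A$ for $g(r)=(1+r)^Ar$, the chord--tangent estimate, and the cross-term bound at $\mu=-1$). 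All the individual steps check out: the affine reduction is valid because an affine function nonnegative at both endpoints of an interval is nonnegative throughout, and the degenerate case $r_2=0$ causes no trouble since $b=0$ then removes every occurrence of $\omega_2$. What your approach buys is a uniform treatment with no case analysis on magnitudes and an explicit constant of order $(A+1)^{-1}$; what the paper's buys is brevity in the easy cases and a constant tracked concretely enough to be reused in Remark 4.4 (where $K\le\max(6,1+2^A)$ is quoted). Either argument serves the rest of the paper equally well.
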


\begin{proof}[Proof of Lemma \ref{k and m}] 
By the symmetry of $k, m \in \Z^n$ in the desired inequality,
we may assume that $|k| \ge |m|$,
and divide the argument into three cases.

\noindent {Case 1:} $|k| \geq 2|m|$. Since
\begin{equation*}
(\langle k \rangle ^A + \langle m \rangle^A ) | k - m | 
\lesssim \langle k \rangle ^A | k |
\end{equation*}
and
\begin{equation*}
\left| \langle k \rangle^A k  - \langle m \rangle^A m \right| 
\geq \langle k \rangle^A |k| - \left( 1 + \frac{ | k | }{2} \right)^A \cdot \frac{ | k | }{2}
\gtrsim \langle k \rangle^A | k |,
\end{equation*}
we have $
(\langle k \rangle ^A + \langle m \rangle^A ) | k - m | \lesssim \left| \langle k \rangle^A k  - \langle m \rangle^A m \right| $.

\noindent {Case 2:} 
$|k|=|m|$. Obviously, $(\langle k \rangle ^A + \langle m \rangle^A ) | k - m | = 2 | \langle k \rangle ^A k - \langle m \rangle^A m | $.

\noindent {Case 3:} 
$|m| < |k| < 2|m|$ ($\Rightarrow 1 < \langle k \rangle / \langle m \rangle < 2$). 
Note that
\begin{equation} \label{k m equiv}
| k - m | \leq \left| \frac{ \langle k \rangle^A }{ \langle m \rangle^A } k - m \right|
\end{equation}
holds in this case.
In fact, if $A=0$ ($\Leftrightarrow \alpha=0$), 
then \eqref{k m equiv} holds obviously true. 
Assume that $0 < A < \infty$ ($\Leftrightarrow 0 < \alpha < 1$). 
Since $\langle k \rangle^A / \langle m \rangle^A > 1 $, 
we have the following equivalences
by squaring both sides of the just above estimate and
by rewriting the euclidean norm $|x|^2 = x \cdot x$
in terms of the standard inner product on $\R^n$:
\begin{eqnarray*}
&& 
| k - m | \leq \left| \frac{ \langle k \rangle^A }{ \langle m \rangle^A }k -  m \right| \\
&\Longleftrightarrow& 
2 \left( \frac{ \langle k \rangle^A }{ \langle m \rangle ^A } -1 \right) k \cdot m 
\leq \left( \frac{ \langle k \rangle^{2A} }{ \langle m \rangle ^{2A} } -1 \right ) | k |^2 
\\
&\Longleftrightarrow& 
2 k \cdot m 
\leq \left( \frac{ \langle k \rangle^{A} }{ \langle m \rangle ^{A} } + 1 \right) | k |^2.
\end{eqnarray*}
The last statement is justified from the facts 
$k \cdot m < | k |^2$
and 
$2 < \frac{ \langle k \rangle^{A} }{ \langle m \rangle ^{A} } + 1 $. 
Therefore, it follows that
\begin{align*}
(\langle k \rangle^A + \langle m \rangle^A ) | k - m | & \sim \langle m \rangle^A | k - m | 
\leq \langle m \rangle^A\left| \frac{ \langle k \rangle^A }{ \langle m \rangle^A } k - m \right| 
= | \langle k \rangle ^A k - \langle m \rangle^A m | .
\end{align*} 

Gathering all the cases, we obtain the desired estimate.
\end{proof}

Now, we start the proof of Lemma \ref{region}.

\begin{proof}[Proof of Lemma \ref{region}]
We first consider the former part, 
that is, the support of $\calF \left[ \sigma_{\ell,m} (X,D) f \right] $.
By the Fubini--Tonelli theorem and the definition of $\sigma_{\ell,m}$ in \eqref{def of sigma}, 
we have
\begin{align*}
\calF[\sigma_{\ell,m} (X,D) f ] (\zeta) 
& = 
(2\pi)^{-n} 
\int_{\R^n_\xi} \widehat f (\xi) 
\int_{\R^n_x} e^{- i x \cdot ( \zeta - \xi ) } 
\sigma_{\ell,m} (x,\xi) 
dx d\xi 
\\
& = 
(2\pi)^{-n} \int_{\R^n_\xi} 
\eta_m^\alpha(\xi) \cdot \widehat f (\xi) 
\int_{\R^n_x} e^{ - i x \cdot ( \zeta - \xi ) } 
\left( \varphi_\ell \left( \frac{D_x}{\langle m \rangle^A } \right) \sigma \right) (x,\xi) 
dx d\xi 
\\
&= 
(2\pi)^{-n} 
\int_{\R^n_\xi} 
\eta_m^\alpha(\xi) \cdot \varphi 
\left( \frac{ \zeta - \xi }{\langle m \rangle^A } - \ell \right) 
\cdot \big( \calF_x \sigma \big) ( \zeta - \xi , \xi) 
\cdot \widehat f (\xi) 
d\xi,
\end{align*}
where $\calF_x \sigma$ is the partial Fourier transform of $\sigma(x,\xi)$ 
with respect to the $x$-variable. Hence, the facts
\begin{align*} 
\supp \eta_m^\alpha 
&\subset 
\left\{ \xi \in \R^n : 
\left| \xi - \langle m \rangle^A m \right| \leq C \langle m \rangle^A 
\right\} ; 
\\
\supp \varphi \left( \frac{ \cdot }{\langle m \rangle^A } - \ell \right) 
& \subset 
\left\{ \xi \in \R^n : 
\left| \xi - \langle m \rangle^A \ell \right| \leq \sqrt{ n } \, \langle m \rangle^A 
\right\} 
\end{align*}
yield that 
\begin{equation} \label{support of 2nd}
\supp \calF [\sigma_{\ell,m} (X,D) f ] 
\subset 
\left\{ \zeta \in \R^n : 
\left| \zeta - \langle m \rangle^A (\ell + m ) \right| \leq (C+ \sqrt{ n } )\langle m \rangle^A 
\right\}.
\end{equation}
This is the former part of this lemma.
 
Next, we consider the latter part. 
Assume that 
$
\supp \varrho_k^\alpha 
\cap 
\supp \calF \left[ \sigma_{\ell,m} (X,D) f \right] 
\neq 
\varnothing,
$
and recall from Proposition \ref{equivalent norm 0} that
\begin{equation} 
\label{support of 1st}
\supp \varrho_k^\alpha 
\subset 
\left\{ \zeta \in \R^n : 
\left| \zeta - \langle k \rangle^A k \right| \leq 2C\langle k \rangle^A 
\right\}.
\end{equation}
Then, combining \eqref{support of 2nd} with \eqref{support of 1st}, we obtain
\begin{equation*}
\left| \langle k \rangle^A k  - \langle m \rangle^A (\ell + m ) \right| 
\lesssim 
\langle m \rangle^A + \langle k \rangle^A ,
\end{equation*}
which implies that
\begin{equation*}
\left| \langle k \rangle^A k  - \langle m \rangle^A m \right| \lesssim (\langle m \rangle^A + \langle k \rangle^A ) \langle \ell \rangle
.
\end{equation*}
Hence, we conclude from Lemma \ref{k and m}
\begin{equation*}
| k - m | 
\lesssim \frac{1}{\langle k \rangle ^A + \langle m \rangle^A } 
	\cdot \left| \langle k \rangle^A k  - \langle m \rangle^A m \right| 
\lesssim \langle \ell \rangle,
\end{equation*}
which completes the proof.
\end{proof}

We next prove that $\sigma_{\ell,m} (X,D) f$ has high decay rate with respect to $| \ell |$. 
For technical purposes to prove our main theorem, 
we slightly change the formulation of $\sigma_{\ell,m} (X,D) f $ as follows. 
Choose a function $\kappa \in \calS(\R^n)$ satisfying 
that $ \kappa (\xi) = 1$ on $|\xi| \leq 1$ and $\kappa (\xi) = 0$ on $|\xi| \geq 2$, 
and set
\begin{equation*}
\kappa_m^\alpha(\xi) = \kappa \left( \frac{\xi - \langle m \rangle^A m }{ C\langle m \rangle^A } \right)
\end{equation*}
with the constant $C>1$ 
as in the definition of $\alpha$-modulation spaces (see Section \ref{sec2.2}). 
Then, $\kappa_m^\alpha = 1$ on the support of $\eta_m^\alpha$ 
and thus
\begin{align} 
\label{change to tilde sigma}
\begin{split}
[ \sigma_{\ell,m} (X,D) f ] (x) 
&= 
(2\pi)^{-n} \int_{\R^n_\xi} e^{i x \cdot \xi} 
\left( \varphi_\ell \left( \frac{D_x}{\langle m \rangle^A } \right) \sigma \right) (x,\xi) 
\cdot \eta_m^\alpha (\xi) \cdot \widehat f (\xi) d\xi 
\\
&= 
(2\pi)^{-n} \int_{\R^n_\xi} e^{i x \cdot \xi} 
\left( \varphi_\ell \left( \frac{D_x}{\langle m \rangle^A } \right) \sigma \right) (x,\xi) 
\cdot \eta_m^\alpha (\xi) \kappa_m^\alpha (\xi) \cdot \widehat f (\xi) d\xi 
\\
&= 
[ \widetilde \sigma_{\ell,m} (X,D) \eta_m^\alpha(D) f ](x) ,
\end{split}
\end{align}
where 
$
\widetilde \sigma_{\ell,m} (x,\xi) 
= 
\left( \varphi_\ell \left( \frac{D_x}{\langle m \rangle^A } \right) \sigma \right) (x,\xi) 
\cdot 
\kappa_m^\alpha (\xi)$. 
For the symbol $\widetilde \sigma_{\ell,m}$, we have the following lemma.

\begin{lemma} \label{L^p to L^p}
Let $0 < p \leq \infty$. 
For an arbitrary integer $N \in \Z_+$,
there exists a constant $N^\prime \in \Z_+$ such that
\begin{equation*}
\left\| \widetilde \sigma_{\ell,m} (X,D) \eta_m^\alpha(D) f \right\|_{L^p} 
\lesssim 
\langle \ell \rangle^{-N} 
\| \sigma ; S^{0}_{\alpha, \alpha} \|_{N^\prime} 
\cdot
\left\| \eta_m^\alpha(D) f  \right\|_{L^p}
\end{equation*}
holds for all $\sigma \in S_{\alpha, \alpha}^0 (\R^n \times \R^n)$, 
all $f \in \calS (\R^n)$
and all $\ell,m \in \Z^n$.
\end{lemma}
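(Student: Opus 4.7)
The plan is to represent $\tilde\sigma_{\ell,m}(X,D)$ as an integral operator whose kernel carries both an $\ell$-decay factor and a natural spatial scale $\langle m\rangle^{-A}$, and then deduce the $L^p$ bound uniformly in $0<p\le\infty$ via the maximal inequality of Proposition \ref{maximal inequality}.

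The first step is to extract the $\ell$-decay at the symbol level. Writing out the convolution associated with $\varphi_\ell(D_x/\langle m\rangle^A)$ and changing variables gives the identity
\[
\tilde\sigma_{\ell,m}(x,\xi)
= \kappa_m^\alpha(\xi)\int_{\R^n} e^{iz\cdot \ell}\,\check\varphi(z)\,\sigma\bigl(x-\langle m\rangle^{-A}z,\,\xi\bigr)\,dz.
\]
I would then integrate by parts $2M$ times via $\langle\ell\rangle^{-2M}(1-\Delta_z)^M e^{iz\cdot\ell}=e^{iz\cdot\ell}$. Each $\partial_z$ landing on $\sigma$ produces a factor $\langle m\rangle^{-A}$ that is compensated by the $S^0_{\alpha,\alpha}$-bound $|\partial_x^\beta\sigma|\lesssim\langle\xi\rangle^{\alpha|\beta|}$ together with the fact that $\langle\xi\rangle^\alpha\sim\langle m\rangle^A$ on $\supp\kappa_m^\alpha$. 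A parallel Leibniz expansion in $\xi$ (where each $\partial_\xi^{\gamma_1}\kappa_m^\alpha$ contributes $\langle m\rangle^{-A|\gamma_1|}\sim\langle\xi\rangle^{-\alpha|\gamma_1|}$) then yields, for every $M\in\Z_+$ and every multi-index $\gamma$,
\[
\bigl|\partial_\xi^\gamma \tilde\sigma_{\ell,m}(x,\xi)\bigr|
\lesssim \langle\ell\rangle^{-2M}\,\|\sigma;S^0_{\alpha,\alpha}\|_{N_1}\,\langle\xi\rangle^{-\alpha|\gamma|}\,\mathbf{1}_{\supp\kappa_m^\alpha}(\xi),
\]
with $N_1=N_1(M,|\gamma|)$.

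Next I would translate this into a kernel estimate. Writing $\tilde\sigma_{\ell,m}(X,D)g(x)=\int K_{\ell,m}(x,z)\,g(x-z)\,dz$ with $K_{\ell,m}(x,z)=(2\pi)^{-n}\int e^{iz\cdot\xi}\,\tilde\sigma_{\ell,m}(x,\xi)\,d\xi$, the trivial bound (combined with $|\supp\kappa_m^\alpha|\sim\langle m\rangle^{An}$) together with integration by parts $K$ times in $\xi$ using the symbol estimate above produces, for every $K\in\Z_+$,
\[
|K_{\ell,m}(x,z)|\lesssim\langle\ell\rangle^{-2M}\,\|\sigma;S^0_{\alpha,\alpha}\|_{N_2}\,\langle m\rangle^{An}\bigl(1+\langle m\rangle^A|z|\bigr)^{-K}.
\]
Now set $g=\eta_m^\alpha(D)f$, which belongs to $L^p_\Omega$ for $\Omega$ a ball of radius $\sim\langle m\rangle^A$. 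For $0<r<p$ put
\[
g^*(x):=\sup_{y\in\R^n}\frac{|g(x-y)|}{1+(\langle m\rangle^A|y|)^{n/r}},
\]
so that $|g(x-z)|\le g^*(x)\bigl(1+(\langle m\rangle^A|z|)^{n/r}\bigr)$. Choosing $K>n+n/r$ in the kernel estimate makes the $z$-integral of $|K_{\ell,m}(x,z)|(1+(\langle m\rangle^A|z|)^{n/r})$ bounded by $\langle\ell\rangle^{-2M}\|\sigma;S^0_{\alpha,\alpha}\|_{N_2}$, uniformly in $x$. Taking the $L^p$-norm and invoking Proposition \ref{maximal inequality} (applied with $R=C\langle m\rangle^A$ and the given $r$) gives $\|g^*\|_{L^p}\lesssim\|\eta_m^\alpha(D)f\|_{L^p}$, and the choice $2M\ge N$ closes the estimate.

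The principal technical hurdle is the scale-matching in the symbol estimate: the two integrations by parts (first in $z$ against $e^{iz\cdot\ell}$, then in $\xi$ against $e^{iz\cdot\xi}$) each generate several powers of $\langle m\rangle^A$, and it is precisely the diagonal relation $\rho=\delta=\alpha$ that lets these balance against the $\langle\xi\rangle^{\pm\alpha|\cdot|}$ weights supplied by the symbol class on $\supp\kappa_m^\alpha$. Once this cancellation is verified the remainder is routine, and Proposition \ref{maximal inequality} keeps the argument uniform across the full range $0<p\le\infty$.
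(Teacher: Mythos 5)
Your proposal is correct and follows essentially the same route as the paper: the symbol-derivative estimate with $\langle\ell\rangle$-decay, the kernel bound $\langle m\rangle^{An}(1+\langle m\rangle^A|z|)^{-K}$ obtained by integration by parts in $\xi$, and the Peetre maximal function argument via Proposition \ref{maximal inequality} are exactly the content of Lemma \ref{decay of l and x-y} and the paper's proof of Lemma \ref{L^p to L^p}. The only (harmless) cosmetic difference is that your change of variables lets you integrate by parts against $e^{iz\cdot\ell}$ with $(1-\Delta_z)^M$, avoiding the paper's separate treatment of the case $\ell=0$.
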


Before starting the proof, we prepare one lemma. 
In the first step of the proof of Lemma \ref{L^p to L^p},
we will use the following estimate:
\[
\left| [ \widetilde \sigma_{\ell,m} (X,D) \eta_m^\alpha(D) f ](x) \right| 
\leq (2\pi)^{-n} 
\int_{\R^n_y} \left| \eta_m^\alpha(D) f (y) \right| 
\cdot 
\left| \int_{\R^n_\xi} e^{i ( x - y ) \cdot \xi} \widetilde \sigma_{\ell,m} (x,\xi) d\xi \right| 
dy ,
\]
which is justified by the Fubini--Tonelli theorem 
and the fact $\widetilde \sigma_{\ell,m} \in \calS(\R^{n}\times\R^{n})$.
In view of this estimate, the following lemma will be helpful.

\begin{lemma} \label{decay of l and x-y}
For arbitrary integers $M,N \in \Z_+$,
we have
\begin{equation*} 
\left| \int_{\R^n_\xi} e^{i y \cdot \xi} \widetilde \sigma_{\ell,m}(x,\xi) d\xi \right| 
\lesssim 
\langle \ell \rangle^{-N}
\, \| \sigma ; S^{0}_{\alpha, \alpha} \|_{M+N}
\cdot \frac{  \langle m \rangle^{An} }{\left(1+\langle m \rangle^A |y| \right)^{M} }
\end{equation*}
for all $\sigma \in S_{\alpha, \alpha}^0 (\R^n \times \R^n)$,
all $x,y \in \R^n$ 
and all $\ell, m \in \Z^n$.
\end{lemma}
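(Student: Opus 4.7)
The plan is to extract the $\langle\ell\rangle^{-N}$ decay via integration by parts against the oscillation hidden in the shifted $x$-multiplier $\varphi(\cdot/\langle m\rangle^A-\ell)$, and to extract the $y$-decay via integration by parts in $\xi$; the volume factor $\langle m\rangle^{An}$ will appear as the measure of $\supp \kappa_m^\alpha$.

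First, I would rewrite the $x$-multiplier as a convolution: a direct calculation gives $\calF^{-1}[\varphi(\cdot/\langle m\rangle^A-\ell)](x) = \langle m\rangle^{An} e^{i\langle m\rangle^A\ell\cdot x}\check\varphi(\langle m\rangle^A x)$, and rescaling $u=\langle m\rangle^A(x-z)$ yields
\begin{equation*}
\Bigl[\varphi_\ell\bigl(\tfrac{D_x}{\langle m\rangle^A}\bigr)\sigma\Bigr](x,\xi)=\int_{\R^n}e^{iu\cdot\ell}\,\check\varphi(u)\,\sigma\bigl(x-\tfrac{u}{\langle m\rangle^A},\xi\bigr)\,du.
\end{equation*}
For $\ell\ne 0$, integrating by parts $N$ times via the identity $(-i\ell\cdot\nabla_u/|\ell|^2)^N e^{iu\cdot\ell}=e^{iu\cdot\ell}$, each $\partial_u$ landing on $\sigma$ produces $\langle m\rangle^{-A}\partial_x\sigma$; since $\xi\in\supp \kappa_m^\alpha$ forces $\langle\xi\rangle^\alpha\lesssim\langle m\rangle^A$, the symbol estimate $|\partial_x^\beta\partial_\xi^{\gamma_1}\sigma|\lesssim\|\sigma;S^0_{\alpha,\alpha}\|_{|\beta|+|\gamma_1|}\langle\xi\rangle^{\alpha|\beta|-\alpha|\gamma_1|}$ implies that the $N$ factors of $\langle m\rangle^{-A}$ from rescaling exactly cancel the $\langle m\rangle^{AN}$ growth of $\partial_x^N\sigma$. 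One thus obtains
\begin{equation*}
\Bigl|\Bigl[\varphi_\ell\bigl(\tfrac{D_x}{\langle m\rangle^A}\bigr)(\partial_\xi^{\gamma_1}\sigma)\Bigr](x,\xi)\Bigr|\lesssim\langle\ell\rangle^{-N}\|\sigma;S^0_{\alpha,\alpha}\|_{N+|\gamma_1|}\langle m\rangle^{-A|\gamma_1|}
\end{equation*}
uniformly in $x\in\R^n$ and $\xi\in\supp \kappa_m^\alpha$; the case $\ell=0$ is covered by the same bound since $\|\check\varphi\|_{L^1}$ is finite.

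Next, I would integrate by parts $M$ times in $\xi$ in $\int e^{iy\cdot\xi}\widetilde\sigma_{\ell,m}(x,\xi)\,d\xi$, producing the factor $|y|^M$ and distributing $M$ derivatives across $\widetilde\sigma_{\ell,m}=[\varphi_\ell(D_x/\langle m\rangle^A)\sigma]\cdot\kappa_m^\alpha$ via Leibniz. Derivatives landing on $\sigma$ are controlled by the previous display, those on $\kappa_m^\alpha$ contribute $\lesssim\langle m\rangle^{-A|\gamma_2|}$, so each $\partial_\xi$ saves $\langle m\rangle^{-A}$. Integrating over $\supp \kappa_m^\alpha$, whose volume is $\lesssim\langle m\rangle^{An}$, yields
\begin{equation*}
|y|^M\Bigl|\int_{\R^n_\xi}e^{iy\cdot\xi}\widetilde\sigma_{\ell,m}(x,\xi)\,d\xi\Bigr|\lesssim\langle\ell\rangle^{-N}\|\sigma;S^0_{\alpha,\alpha}\|_{M+N}\langle m\rangle^{An-AM}.
\end{equation*}
Combining this with the trivial bound (no $\xi$-IBP) $\lesssim\langle\ell\rangle^{-N}\|\sigma;S^0_{\alpha,\alpha}\|_N\langle m\rangle^{An}$ via the elementary inequality $(1+t)^M\lesssim 1+t^M$ gives the stated estimate.

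The main obstacle is the careful bookkeeping of powers of $\langle m\rangle^A$: one must verify that on $\supp \kappa_m^\alpha$ the $S^0_{\alpha,\alpha}$-bounds convert into bounds in $\langle m\rangle$ in exactly the right way, so that $x$-derivatives in the $\ell$-IBP are essentially dimensionless (the gain from $\partial_x\sigma$ matches the loss from the rescaling $u/\langle m\rangle^A$), while $\xi$-derivatives in the $y$-IBP genuinely save $\langle m\rangle^{-A}$ per derivative and thus build up the $(\langle m\rangle^A|y|)^M$ factor.
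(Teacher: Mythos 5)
Your proposal is correct and follows essentially the same route as the paper's proof: the paper likewise writes $\varphi_\ell(D_x/\langle m\rangle^A)(\partial_\xi^\beta\sigma)$ as a convolution against a modulated, rescaled $\check\varphi$, integrates by parts $N$ times against the phase carrying $\ell$ (using $\langle\xi\rangle^\alpha\sim\langle m\rangle^A$ on $\supp\kappa_m^\alpha$ to cancel the powers of $\langle m\rangle^A$), then performs an $M$-fold integration by parts in $\xi$ and combines with the trivial bound over the support of measure $\lesssim\langle m\rangle^{An}$. The only cosmetic difference is your change of variables $u=\langle m\rangle^A(x-z)$ before integrating by parts; note also that for the factor $\langle m\rangle^{-A|\gamma_1|}$ you need the two-sided equivalence $\langle\xi\rangle^\alpha\sim\langle m\rangle^A$ on $\supp\kappa_m^\alpha$, not merely the upper bound you state.
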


\begin{proof}[Proof of Lemma \ref{decay of l and x-y}] 

In order to obtain the decay with respect to $|y|$, 
we will use integration by parts with respect to the $\xi$-variable, 
so that we first consider the derivatives of $\widetilde \sigma_{\ell,m}$.
For any multi-index $\gamma \in \Z_+^n$ 
with $| \gamma | = M$ and $N \in \Z_+$, 
we have
\begin{equation*}
\left| \partial_\xi^\gamma \left( \widetilde \sigma_{\ell,m} (x,\xi) \right) \right| 
\lesssim 
\langle \ell \rangle ^{-N} \langle m \rangle^{-A M } 
\, \| \sigma ; S^{0}_{\alpha, \alpha} \|_{M+N}
\cdot \chi_{\{ \xi\in\R^n: | \xi - \langle m \rangle^A m | \leq 2C \langle m \rangle^A \} }(\xi),
\end{equation*}
where $\chi_\Omega$ is the characteristic function on the set $\Omega$. 
In fact, the Leibniz rule yields that
\begin{equation} \label{derivative of sigma}
\left| \partial_\xi^\gamma \left( \widetilde \sigma_{\ell,m} (x,\xi) \right) \right| 
\lesssim 
\sum_{\beta \leq \gamma} 
\left| \left( \varphi_\ell \left( \frac{D_x}{\langle m \rangle^A } \right)
	\left(\partial_\xi^\beta \sigma \right)\right) (x,\xi) \right| 
\cdot 
\left| \left( \partial_{\xi}^{\gamma - \beta}\kappa \right) 
	\left(\frac{\xi - \langle m \rangle^A m}{ C \langle m \rangle^A } \right) \right| 
\cdot 
\langle m \rangle^{ - A | \gamma - \beta | }.
\end{equation}
Let us fix $\beta \leq \gamma$ for the moment.
Next, note that $\langle \xi \rangle \sim \langle m \rangle^{ \frac{ 1 }{ 1 - \alpha } }$ 
if 
$\xi \in 
\supp \left( \partial_{\xi}^{\gamma - \beta}\kappa \right) 
\left(\frac{\cdot - \langle m \rangle^A m}{ C \langle m \rangle^A } \right)$ 
in \eqref{derivative of sigma}. 
Then, using the Fubini--Tonelli theorem, we have
\begin{align*}
\left| \left( \varphi_\ell \left( \frac{D_x}{\langle m \rangle^A } \right) 
\left(\partial_\xi^\beta \sigma \right)\right) (x,\xi) \right| 
&=
(2\pi)^{-n} 
\left| \int_{\R^n_z } \left(\partial_\xi^\beta \sigma \right) (z , \xi ) 
\int_{\R^n_{\zeta}} e^{i ( x - z ) \cdot \zeta } \varphi \left( \frac{\zeta}{\langle m \rangle^A } -\ell \right) 
d\zeta dz \right| 
\\
&=  
(2\pi)^{-n} \langle m \rangle^{An} 
\left| \int_{\R^n_z } \left(\partial_\xi^\beta \sigma \right) (z , \xi ) 
\int_{\R^n_{\zeta}} e^{i \langle m \rangle^A ( x - z ) \cdot ( \zeta + \ell ) } \varphi \left( \zeta \right) 
d\zeta dz \right| 
\\
&= 
\langle m \rangle^{An} 
\left| 
\int_{\R^n_z } e^{ - i \langle m \rangle^A \ell \cdot z } \left(\partial_\xi^\beta \sigma \right) (z , \xi ) 
\cdot \check \varphi \left( \langle m \rangle^A ( x - z ) \right) 
dz \right| , 
\end{align*}
where, in the second identity, we used the changes of variables: 
$\zeta' = \zeta / \langle m \rangle^A - \ell $.
If $\ell \neq 0$, 
we apply an $N$-fold integration by parts with respect to the $z$-variable 
to obtain
\begin{align*}
& 
\left| \int_{\R^n_z } e^{ - i \langle m \rangle^A \ell \cdot z } 
\left(\partial_\xi^\beta \sigma \right) (z , \xi )
\cdot \check \varphi \left( \langle m \rangle^A ( x - z ) \right) 
dz \right| 
\\
&\lesssim 
\left( \langle m \rangle^A | \ell | \right)^{-N} 
\sum_{ \substack{ \widetilde \beta \leq \widetilde \gamma \\ | \widetilde \gamma | = N } } 
\langle m \rangle^{ A | \widetilde \gamma - \widetilde \beta | } 
\int_{\R^n_z } 
\left| \left( \partial_{z}^{\widetilde \beta} \partial_\xi^\beta \sigma \right) (z , \xi ) \right| 
\cdot 
\left| \big(\partial_z^{ \widetilde \gamma - \widetilde \beta} \check \varphi \big) 
	\big( \langle m \rangle^A ( x - z ) \big) \right|
dz
\\
&\lesssim
\left( \langle m \rangle^A | \ell | \right)^{-N}
\sum_{ \substack{ \widetilde \beta \leq \widetilde \gamma \\ |\widetilde \gamma | = N } } 
\langle m \rangle^{ A | \widetilde \gamma - \widetilde \beta | }
\cdot \langle m \rangle^{A|\widetilde \beta| - A | \beta | } 
\, \| \sigma ; S^{0}_{\alpha, \alpha} \|_{M+N}
\cdot \langle m \rangle^{-An}
\\
&\sim 
\langle \ell \rangle^{-N} \langle m \rangle^{-A|\beta|-An} 
\, \| \sigma ; S^{0}_{\alpha, \alpha} \|_{M+N}.
\end{align*}
Here, we used 
$ \langle \xi \rangle^\alpha \sim \langle m \rangle^{ A }$ 
in the second inequality, 
and the facts $| \widetilde \beta | + | \widetilde\gamma -  \widetilde\beta | = | \widetilde\gamma| =N$ 
and $ | \ell | \sim \langle \ell \rangle$ for $| \ell | \geq 1$ in the last equivalence. 
On the other hand, if $\ell = 0$, then it similarly follows that
\begin{equation*}
\left| \int_{\R^n_z } e^{ - i \langle m \rangle^A \ell \cdot z } 
\left( \partial_\xi^\beta \sigma \right) (z , \xi ) 
\cdot \check \varphi \left( \langle m \rangle^A ( x - z ) \right) 
dz \right| 
\lesssim
\langle m \rangle^{-A|\beta|-An} 
\, \| \sigma ; S^{0}_{\alpha, \alpha} \|_{M}.
\end{equation*}
Hence, we obtain
\begin{equation} \label{derivative phi}
\left| \left( \varphi_\ell \left( \frac{D_x}{\langle m \rangle^A } \right) 
	\left(\partial_\xi^\beta \sigma \right) \right) (x,\xi) 
\right| 
\lesssim
\langle \ell \rangle^{-N} \langle m \rangle^{-A|\beta|} 
\, \| \sigma ; S^{0}_{\alpha, \alpha} \|_{M+N}
\end{equation}
for all $\ell \in\Z^n$.
Substituting \eqref{derivative phi} into \eqref{derivative of sigma}, 
we have
\begin{align*}
\left| \partial_\xi^\gamma \left( \widetilde \sigma_{\ell,m} (x,\xi) \right) \right| 
&\lesssim 
\sum_{\beta \leq \gamma} \langle \ell \rangle^{-N} \langle m \rangle^{-A|\beta|}
\, \| \sigma ; S^{0}_{\alpha, \alpha} \|_{M+N}
\cdot 
\left| \left( \partial_{\xi}^{\gamma - \beta}\kappa \right) 
	\left(\frac{\xi - \langle m \rangle^A m}{ C \langle m \rangle^A } \right) \right| 
\cdot \langle m \rangle^{ - A | \gamma - \beta | } 
\\
&\lesssim
\langle \ell \rangle^{-N} \langle m \rangle^{-A M}
\,
\| \sigma ; S^{0}_{\alpha, \alpha} \|_{M+N}
\cdot 
\chi_{\{ \xi\in\R^n: | \xi - \langle m \rangle^A m | \leq 2C \langle m \rangle^A \} } (\xi),
\end{align*}
where we used the identity 
$|\beta|+ |\gamma - \beta | = | \gamma | = M$ 
to obtain the last inequality. 
This concludes the result in this step.

Next, we actually investigate the decay of the given integral with respect to $|y|$ 
and obtain the desired estimate for arbitrary $M , N \in \Z_+$. 
Obviously, using the conclusion in Step 1 with $\gamma=0$, we have
\begin{align} 
\label{y=0}
\begin{split}
\left| \int_{\R^n_\xi} e^{i  y \cdot \xi} \widetilde \sigma_{\ell,m}(x,\xi)  d\xi \right| 
&\lesssim 
\langle \ell \rangle^{-N} 
\,
\| \sigma ; S^{0}_{\alpha, \alpha} \|_{N}
\cdot 
\int_{\R^n_\xi} \chi_{\{ \xi\in\R^n: | \xi - \langle m \rangle^A m | \leq 2C \langle m \rangle^A \} } (\xi) d\xi 
\\
& \lesssim
\langle \ell \rangle^{-N} \langle m \rangle^{An} 
\,
\| \sigma ; S^{0}_{\alpha, \alpha} \|_{N}
.
\end{split}
\end{align}
On the other hand, by an $M$-fold integration by parts 
with respect to the $\xi$-variable, we get
\begin{align} 
\label{y neq 0}
\begin{split}
\left| 
	\int_{\R^n_\xi} e^{i  y \cdot \xi} \widetilde \sigma_{\ell,m}(x,\xi) d\xi 
\right| 
&\leq 
| y |^{-M} \sum_{|\gamma|=M}
\int_{\R^n_\xi} 
	\left| \partial_\xi^\gamma \left( \widetilde \sigma_{\ell,m} (x,\xi) \right) \right| d\xi 
\\
&\lesssim
| y |^{-M} 
\langle \ell \rangle^{-N} 
\langle m \rangle^{-AM} 
\,
\| \sigma ; S^{0}_{\alpha, \alpha} \|_{M+N}
\int_{\R^n_\xi} \chi_{\{ \xi\in\R^n: | \xi - \langle m \rangle^A m | \leq 2C \langle m \rangle^A \} } (\xi) d\xi
\\
&\sim 
\langle \ell \rangle^{-N} 
\langle m \rangle^{An}
\,
\| \sigma ; S^{0}_{\alpha, \alpha} \|_{M+N}
\left(\langle m \rangle^{A} | y |\right)^{-M} 
\end{split}
\end{align}
for $y \neq 0$. Combining the conclusions \eqref{y=0} and \eqref{y neq 0}, we obtain
\begin{equation*} 
\left| \int_{\R^n_\xi} e^{i  y \cdot \xi} \widetilde \sigma_{\ell,m}(x,\xi)  d\xi \right| 
\lesssim 
\langle \ell \rangle ^{-N}
\, 
\| \sigma ; S^{0}_{\alpha, \alpha} \|_{M+N}
\cdot
\frac{  \langle m \rangle^{An} }
{ \left( 1 + \langle m \rangle^A | y | \right)^{M} }
\end{equation*}
for all $x, y \in \R^n$ and all $\ell, m \in \Z^n$.
\end{proof}

We are now in a position to prove Lemma \ref{L^p to L^p}.

\begin{proof}[Proof of Lemma \ref{L^p to L^p}] 
We choose $M = ( n + 1) + [n/r] + 1$ for $0 < r < p \leq \infty$ in Lemma \ref{decay of l and x-y}.
Then we have by the Fubini--Tonelli theorem and Lemma \ref{decay of l and x-y}
\begin{align*}
\left| [ \widetilde \sigma_{\ell,m} (X,D) \eta_m^\alpha(D) f ](x) \right| 
&\leq 
(2\pi)^{-n} \int_{\R^n_y} \left| \eta_m^\alpha(D) f (y) \right| \cdot \left| \int_{\R^n_\xi} e^{i ( x - y ) \cdot \xi} \widetilde \sigma_{\ell,m} (x,\xi) d\xi \right| dy 
\\
&\lesssim 
\langle \ell \rangle ^{-N} 
\,
\| \sigma ; S^{0}_{\alpha, \alpha} \|_{N'}
\int_{\R^n_y} \left|  \eta_m^\alpha(D) f  (y) \right| 
\cdot 
\frac{ \langle m \rangle^{An} }{ \left( 1 + \langle m \rangle^A | x-y | \right)^{( n + 1) + [n/r] +1} } dy 
\\
&\lesssim 
\langle \ell \rangle ^{-N} 
\,
\| \sigma ; S^{0}_{\alpha, \alpha} \|_{N'}
\cdot 
\sup_{y\in\R^n}
	\frac{ \left| \eta_m^\alpha(D) f (x-y) \right|  }{ 1 + \left( \langle m \rangle^A | y | \right)^{ n/r} } 
\cdot
\int_{\R^n_y} \frac{ \langle m \rangle^{An} }{ \left( 1 + \langle m \rangle^A | y | \right)^{ n + 1 } } dy 
\\
&\sim 
\langle \ell \rangle ^{-N} 
\,
\| \sigma ; S^{0}_{\alpha, \alpha} \|_{N'}
\cdot 
\sup_{y\in\R^n}
	\frac{ \left|  \eta_m^\alpha(D) f (x-y) \right|  }{ 1 + \left( \langle m \rangle^A | y | \right)^{ n/r} } 
,
\end{align*}
where $N'= n + [n/r] + 2 + N \in \Z_+$.
Then, taking the $L^p$ (quasi)-norm of both sides
and applying Lemma \ref{maximal inequality for alpha modulation},
we obtain
\begin{equation*}
\left\| \widetilde \sigma_{\ell,m} (X,D) \eta_m^\alpha(D) f \right\|_{L^p } 
\lesssim 
\langle \ell \rangle ^{-N} 
\| \sigma ; S^{0}_{\alpha, \alpha} \|_{N'}
\cdot
\left\| \eta_m^\alpha(D) f  \right\|_{L^p } ,
\end{equation*}
which is the desired result.
\end{proof}



\section{Proof of the main theorems} \label{sec4}


In this section, we prove Theorem \ref{main theorem} and Corollaries \ref{main cor} and \ref{sharp cor}
stated in Section \ref{sec1}. 
We first give a proof of Theorem \ref{main theorem}.
To this end, we prepare two facts.
The first one is as follows.

\begin{lemma} \label{convolution 0<p<1 alpha modulation}
Let $ 0 < p \leq \infty$. Then we have
\begin{align*}
\left\| \varrho_k^\alpha(D) \left[ \widetilde \sigma_{\ell,m} (X,D) \eta_m^\alpha(D) f \right] \right \|_{L^p} \lesssim \langle \ell \rangle^{ A n ( \frac{ 1 }{ \min(1,p) } - 1 ) } \left\| \widetilde \sigma_{\ell,m} (X,D) \eta_m^\alpha(D) f  \right \|_{L^p}
\end{align*}
for all $f \in \calS (\R^n)$ and all $k,\ell,m \in \Z^n$.
\end{lemma}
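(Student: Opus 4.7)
The plan is to view $\varrho_k^\alpha(D)\bigl[\widetilde\sigma_{\ell,m}(X,D)\eta_m^\alpha(D)f\bigr]$ as the convolution $(\calF^{-1}\varrho_k^\alpha)\ast g$, where $g:=\widetilde\sigma_{\ell,m}(X,D)\eta_m^\alpha(D)f$, and to split the argument according to whether $p\ge 1$ (in which case the target exponent of $\langle\ell\rangle$ is $0$ and Young's inequality will suffice) or $0<p<1$ (where Proposition \ref{convolution 0<p<1} together with the support information from Lemma \ref{region} is needed). The only preliminary computation I would do is the scaling identity
\[
\calF^{-1}\varrho_k^\alpha(x)=e^{i\langle k\rangle^{A}k\cdot x}(C\langle k\rangle^{A})^{n}[\calF^{-1}\varrho](C\langle k\rangle^{A}x),
\]
which after a change of variables yields $\|\calF^{-1}\varrho_k^\alpha\|_{L^p}\sim\langle k\rangle^{An(1-1/p)}$ for every $0<p\le\infty$; in particular $\|\calF^{-1}\varrho_k^\alpha\|_{L^1}\sim 1$ uniformly in $k$.

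For $1\le p\le\infty$, Young's inequality $L^1\ast L^p\hookrightarrow L^p$ immediately gives $\|\varrho_k^\alpha(D)g\|_{L^p}\lesssim\|g\|_{L^p}$, which is the claim since $An(1/\min(1,p)-1)=0$ in this range.

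For $0<p<1$, I apply Proposition \ref{convolution 0<p<1}. By Lemma \ref{region}, $\supp\calF g$ is contained in a ball of radius $\lesssim\langle m\rangle^{A}$ centered at $\langle m\rangle^{A}(\ell+m)$, while $\supp\varrho_k^\alpha$ is a ball of radius $\lesssim\langle k\rangle^{A}$ centered at $\langle k\rangle^{A}k$. If these two sets are disjoint then $\varrho_k^\alpha(D)g\equiv 0$ and there is nothing to prove. Otherwise Remark \ref{cor region} yields $|k-m|\lesssim\langle\ell\rangle$, hence $\langle m\rangle\lesssim\langle k\rangle\langle\ell\rangle$, so both Fourier supports lie inside a common ball $\Omega$ of radius
\[
R\lesssim\langle k\rangle^{A}+\langle m\rangle^{A}\lesssim\langle k\rangle^{A}\langle\ell\rangle^{A}.
\]
Proposition \ref{convolution 0<p<1} then produces
\[
\|\varrho_k^\alpha(D)g\|_{L^p}\lesssim R^{n(1/p-1)}\|\calF^{-1}\varrho_k^\alpha\|_{L^p}\|g\|_{L^p}\lesssim\langle k\rangle^{An(1/p-1)}\langle\ell\rangle^{An(1/p-1)}\cdot\langle k\rangle^{-An(1/p-1)}\|g\|_{L^p},
\]
and the two $\langle k\rangle$-factors cancel to leave exactly $\langle\ell\rangle^{An(1/p-1)}\|g\|_{L^p}$, which is the desired bound.

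The only delicate step is the $0<p<1$ case, and within it the upgrade $R\lesssim\langle k\rangle^{A}+\langle m\rangle^{A}\Rightarrow R\lesssim\langle k\rangle^{A}\langle\ell\rangle^{A}$, which crucially uses the geometric input $|k-m|\lesssim\langle\ell\rangle$ from Remark \ref{cor region}; this is precisely what makes the $\langle k\rangle$-factors coming from $R^{n(1/p-1)}$ and from $\|\calF^{-1}\varrho_k^\alpha\|_{L^p}$ cancel and thus leaves only the stated $\langle\ell\rangle$-weight.
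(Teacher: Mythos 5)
Your proof is correct and follows essentially the same route as the paper: Young's inequality for $p\ge 1$, and for $0<p<1$ the combination of Proposition \ref{convolution 0<p<1} with the support information from Lemma \ref{region}, the scaling $\|\calF^{-1}\varrho_k^\alpha\|_{L^p}\sim\langle k\rangle^{An(1-1/p)}$, and the bound $\langle m\rangle^{A}\lesssim\langle k\rangle^{A}\langle\ell\rangle^{A}$ coming from $|k-m|\lesssim\langle\ell\rangle$, so that the $\langle k\rangle$-powers cancel. The paper phrases the last step as $\langle m\rangle^{A}\lesssim\langle k\rangle^{A}+\langle\ell\rangle^{A}$, but this is an immaterial difference.
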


\begin{proof}[Proof of Lemma \ref{convolution 0<p<1 alpha modulation}]
The case $1\leq p \leq \infty$ follows from the Young inequality. Assume that $0 < p < 1$. 
We recall from Proposition \ref{equivalent norm 0} and Lemma \ref{region} that
\begin{align*}
\supp \varrho_k^\alpha 
&\subset 
\left\{ \zeta \in \R^n :
\left| \zeta - \langle k \rangle^A k \right| \leq 2C\langle k \rangle^A 
\right\} ;
\\
\supp \calF [\sigma_{\ell,m} (X,D) f ] 
&\subset 
\left\{ \zeta \in \R^n : 
\left| \zeta - \langle m \rangle^A (\ell + m ) \right| \leq (C+ \sqrt{ n } )\langle m \rangle^A 
\right\},
\end{align*}
and from \eqref{change to tilde sigma} that
\[
[ \sigma_{\ell,m} (X,D) f ] (x) 
= [ \widetilde \sigma_{\ell,m} (X,D) \eta_m^\alpha(D) f ](x).
\]
Combining these, we see that $\varrho_k^\alpha(D) \left[ \widetilde \sigma_{\ell,m} (X,D) \eta_m^\alpha(D) f \right] $ always vanishes unless 
\begin{equation*}
\left| \langle m \rangle^A (\ell + m ) - \langle k \rangle^A k \right| \leq 2C\langle k \rangle^A + (C+ \sqrt{ n } )\langle m \rangle^A.
\end{equation*}
Hence, in those cases when 
$\varrho_k^\alpha(D) \left[ \widetilde \sigma_{\ell,m} (X,D) \eta_m^\alpha(D) f \right] $
does not vanish identically,
we obtain
\begin{align*}
\supp \calF[\sigma_{\ell,m} (X,D) f ] &\subset \left\{ \zeta \in \R^n : \left| \zeta - \langle k \rangle^A k \right| \leq 2C\langle k \rangle^A + 2(C+ \sqrt{ n } )\langle m \rangle^A \right\} ;\\
\supp \varrho_k^\alpha &\subset \left\{ \zeta \in \R^n : \left| \zeta - \langle k \rangle^A k \right| \leq 2C\langle k \rangle^A + 2(C+ \sqrt{ n } )\langle m \rangle^A \right\}.
\end{align*}
Moreover, recalling Lemma \ref{region} (or Remark \ref{cor region}), we have $| k - m | \lesssim \langle \ell \rangle$, which implies $\langle m \rangle^A \lesssim \langle k \rangle^A + \langle \ell \rangle^A$.
Hence, we have by Proposition \ref{convolution 0<p<1}
\begin{align*}
\left\| 
	\varrho_k^\alpha(D) 
		\left[ \widetilde \sigma_{\ell,m} (X,D) \eta_m^\alpha(D) f \right] 
\right \|_{L^p} 
&\lesssim 
\left( \langle k \rangle^A + \langle m \rangle^A \right)^{ n ( \frac{ 1 }{ p } - 1 ) } 
\left\| \calF^{-1} [\varrho_k^\alpha] \right\|_{L^p}
\cdot \left\| \widetilde \sigma_{\ell,m} (X,D) \eta_m^\alpha(D) f \right \|_{L^p} 
\\
&\lesssim 
\left( \langle k \rangle^A + \langle \ell \rangle^A \right)^{ n ( \frac{ 1 }{ p } - 1 ) } 
\langle k \rangle^{ - A n ( \frac{1}{p} - 1 ) } 
\cdot \left\| \widetilde \sigma_{\ell,m} (X,D) \eta_m^\alpha(D) f \right \|_{L^p} 
\\
&\lesssim 
\langle \ell \rangle^{ A n ( \frac{ 1 }{ p } - 1 ) } 
\left\| \widetilde \sigma_{\ell,m} (X,D) \eta_m^\alpha(D) f \right \|_{L^p},
\end{align*}
which completes the proof.
\end{proof}

As the second preparation for the proof of Theorem \ref{main theorem}, 
we note that 
\begin{align}
\label{triangle}
\begin{split}
\left\| \sum_{\ell\in\Z^n} f_{\ell} (x) \right\|_{L^p(\R^n_x)} 
&\leq 
\left( \sum_{\ell\in\Z^n} \left\| f_{\ell} (x) \right\|_{L^p(\R^n_x)}^{\min(1,p)} \right)^{\frac{1}{\min(1,p)}};
\\
\left\| \sum_{\ell\in\Z^n} a_{k,\ell} \right\|_{\ell^q(\Z^n_k)} 
&\leq 
\left( \sum_{\ell\in\Z^n} \left\|  a_{k,\ell} \right\|_{\ell^q(\Z^n_k)}^{\min(1,q)} \right)^{\frac{1}{\min(1,q)}}
\end{split}
\end{align}
hold for $0 < p,q \leq \infty$. 
For $p \geq 1$ or $q \geq 1$,
these are just the triangle inequality.
For $0 < p < 1$ or $ 0 < q < 1$,
these estimates follow from the fact that $| \sum a_k |^p \leq \sum | a_k |^p$,
i.e., the embedding $\ell^p \hookrightarrow \ell^1$. 

Now, we begin with the proof of Theorem \ref{main theorem}. 

\begin{proof}[Proof of Theorem \ref{main theorem}] 

Due to Proposition \ref{lift op} and the symbolic calculus (see Section \ref{sec2.1}), 
it suffices to prove Theorem \ref{main theorem} only for $s=0$. 
In fact, if $\sigma \in S^0_{\alpha, \alpha}$,
then there is a symbol $\tau \in S^0_{\alpha, \alpha}$ such that
$J^s \sigma (X,D) J^{-s} = \tau (X,D)$ and the estimate
$
\| \tau ; S^0_{\alpha, \alpha} \|_{L} \leq C \| \sigma ; S^0_{\alpha, \alpha} \|_{L'}
$
holds, where the constants $C$ and $L'$ depend on $L,s,n$.
Here, we set $J = (I-\Delta)^{1/2}$.
Hence, assuming that Theorem \ref{main theorem} 
(or more precisely \eqref{main theorem estimate}) with $s=0$ holds,
we have by Proposition \ref{lift op} for some $L,L'$
\begin{align*}
\left\| \sigma (X,D) f \right\|_{ M_{ p , q }^{ s , \alpha } } 
\sim 
\left\| J^s \sigma (X,D) J^{-s} J^s f \right\|_{ M_{ p , q }^{ 0 , \alpha } } 
&=
\left\| \tau (X,D) \left[ J^s f \right] \right\|_{ M_{ p , q }^{ 0 , \alpha } } 
\\
&\lesssim 
\| \tau ; S^0_{\alpha, \alpha} \|_{L} 
\left\| J^s f \right\|_{ M_{ p , q }^{ 0 , \alpha } } 
\lesssim 
\| \sigma ; S^0_{\alpha, \alpha} \|_{L'} 
\left\| f \right\|_{ M_{ p , q }^{ s , \alpha } }.
\end{align*}

We actually prove Theorem \ref{main theorem} for $s =0$.
We first estimate the $L^p$ (quasi)-norm of $\varrho_k^\alpha(D) [ \sigma(X,D) f ] $. 
Set $p^\ast = \min (1,p)$. 
Then, we have
\begin{align*}
\left\| \varrho_k^\alpha(D) \left[ \sigma(X,D) f  \right] \right\|_{L^p} 
&=
\left\| \sum_{ \ell\in \Z^n } \sum_{ \substack{ m \in \Z^n \\ | k - m | \lesssim \langle \ell \rangle } } 
\varrho_k^\alpha(D) \left[ \widetilde \sigma_{\ell,m} (X,D) \eta_m^\alpha(D) f \right] \right \|_{L^p} 
\\
&\leq
\left( 
\sum_{ \ell\in \Z^n } \sum_{ \substack{ m \in \Z^n \\ | k - m | \lesssim \langle \ell \rangle } } 
\left\| \varrho_k^\alpha(D) \left[ \widetilde \sigma_{\ell,m} (X,D) \eta_m^\alpha(D) f \right] \right \|_{L^p}^{p^\ast} 
\right)^{1/p^\ast} 
\\
&\lesssim
\left( \sum_{ \ell\in \Z^n } \sum_{ \substack{ \widetilde m \in \Z^n \\ | \widetilde m | \lesssim \langle \ell \rangle } } 
\langle \ell \rangle^{ \left\{ A n ( \frac{ 1 }{ p^\ast } - 1 ) - N \right\} \cdot p^\ast}  
\left\|\sigma ; S^0_{\alpha,\alpha}\right\|_{N'}^{p^\ast}
\left\| \eta_{k-\widetilde m}^\alpha(D) f \right \|_{L^p}^{p^\ast} 
\right)^{1/p^\ast}
\end{align*}
for all $k\in\Z^n$,
where $N' = N'_{n,p,N} \in \Z_+$ is the constant given in Lemma \ref{L^p to L^p}.
Also, we applied \eqref{teinei}, \eqref{change to tilde sigma} and Lemma \ref{region} 
to obtain the first line. 
In the second line, we used the first estimate in \eqref{triangle}.
In the last line, 
we invoked Lemmas \ref{L^p to L^p} and \ref{convolution 0<p<1 alpha modulation}. 

Next, recalling Proposition \ref{equivalent norm 0} 
and taking the $\ell^q (\Z^n_k)$ (quasi)-norm of the above estimate, 
we obtain
\begin{align*}
\left\| \sigma(X,D) f \right\|_{M^{0,\alpha}_{p,q}} 
&\lesssim 
\left\|\sigma ; S^0_{\alpha,\alpha}\right\|_{N'}
\left\| 
\sum_{ \ell\in \Z^n } \sum_{ \substack{ \widetilde m \in \Z^n \\ | \widetilde m | \lesssim \langle \ell \rangle } } 
	\langle \ell \rangle^{ \left\{ A n ( \frac{ 1 }{ p^\ast } - 1 ) - N \right\}\cdot p^\ast}
	\left\| \eta_{k-\widetilde m}^\alpha(D) f \right \|_{L^p}^{p^\ast} 
\right\|_{\ell^{q/p^\ast} (\Z^n_k)}^{1/p^\ast}
\\
&\leq 
\left\|\sigma ; S^0_{\alpha,\alpha}\right\|_{N'}
\left(
\sum_{ \ell\in \Z^n } \sum_{ \substack{ \widetilde m \in \Z^n \\ | \widetilde m | \lesssim \langle \ell \rangle } } 
	\langle \ell \rangle^{ \left\{ A n ( \frac{ 1 }{ p^\ast } - 1 ) - N \right\} \cdot \min(1,p,q)} 
	\Big\| \left\| \eta_{k-\widetilde m}^\alpha(D) f \right \|_{L^p} \Big\|_{\ell^{q}(\Z^n_k)}^{\min (1,p,q) } 
\right)^{\frac{1}{\min(1, p, q)}} 
\\
&\lesssim 
\left\|\sigma ; S^0_{\alpha,\alpha}\right\|_{N'}
\left\| f \right\|_{M_{p,q}^{0,\alpha} } 
\left(\sum_{ \ell\in \Z^n } 
	\langle \ell \rangle^{ \left\{ A n ( \frac{ 1 }{ p^\ast } - 1 ) - N \right\} \cdot \min(1,p,q) + n }
\right)^{\frac{1}{\min(1, p, q)}},
\end{align*}
where in the second inequality we used the second inequality in \eqref{triangle} and the identity 
\[
p^\ast \cdot \left( \frac{q}{p^\ast} \right)^\ast 
= p^\ast \cdot \min \left(1, \frac{q}{p^\ast} \right) = \min (p^\ast, q) = \min (1,p,q).
\]
Therefore, choosing $N \in \Z_+$ such that 
$ \left\{ A n ( \frac{ 1 }{ p^\ast } - 1 ) - N \right\} \cdot \min(1,p,q) + n < -n$, 
then we have
\[
\left\| \sigma(X,D) f \right\|_{M^{0,\alpha}_{p,q}} 
\lesssim 
\left\|\sigma ; S^0_{\alpha,\alpha}\right\|_{N'}
\| f \|_{M_{p,q}^{0,\alpha} },
\] 
which completes the proof of the main theorem.
\end{proof}

Next, we prove Corollary \ref{main cor}. 
This is immediately given from Theorem \ref{main theorem} and the symbolic calculus.
\begin{proof}[Proof of Corollary \ref{main cor}] 
We observe that if $\sigma \in S_{\rho, \delta}^b$, then $\sigma \in S_{\alpha, \alpha}^b$, 
since $0 \leq \delta \leq \alpha \leq \rho$.
Here, recall that $0\leq \alpha <1$.
Then, the symbolic calculus shows that there is a symbol $\tau \in S_{\alpha, \alpha}^0$
such that $J^{-b} \sigma(X,D) = \tau(X,D)$,
where $J = (I-\Delta)^{1/2}$.
Thus, Theorem \ref{main theorem} shows that 
$\| \tau(X,D) f \|_{M_{p,q}^{s,\alpha}} \lesssim \| f \|_{M_{p,q}^{s,\alpha}}$
for all $f \in \calS(\R^n)$.
In combination with Proposition \ref{lift op}, this implies that
\begin{equation*}
\| \sigma(X,D) f \|_{M_{p,q}^{s-b,\alpha}}
\sim \| J^{-b} \sigma(X,D) f \|_{M_{p,q}^{s,\alpha}}
= \| \tau(X,D) f \|_{M_{p,q}^{s,\alpha}}
\lesssim \| f \|_{M_{p,q}^{s,\alpha}}.
\end{equation*}
This completes the proof.
\end{proof}

We next prove Corollary \ref{sharp cor}. 
In order to achieve this, 
we recall the following inclusion relations 
between modulation spaces and $\alpha$-modulation spaces
given by \cite[Theorem 4.1]{han wang 2014} and \cite[Section 1]{toft wahlberg 2012}.

\begin{lemma} 
\label{embedding 0-1}
Let $0 < q \leq \infty$ and $0 \leq \alpha < 1$. 

\noindent{\rm (1)} 
$M_{2,q}^{s_1,\alpha} (\R^n) \subset M_{2,q}^{0} (\R^n)$ holds 
for $s_1 =  n \alpha \cdot \max(0, 1/q-1/2)$;

\noindent{\rm (2)} 
$M_{2,q}^{0} (\R^n) \subset M_{2,q}^{s_2,\alpha} (\R^n)$ holds 
for $s_2 =  n \alpha \cdot  \min(0, 1/q-1/2)$.
\end{lemma}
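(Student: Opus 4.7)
The plan is to exploit $p=2$ via Plancherel, so that both $\|f\|_{M_{2,q}^0}$ and $\|f\|_{M_{2,q}^{s,\alpha}}$ become weighted $\ell^q$-norms of $L^2$-masses on Fourier blocks, and the question reduces to comparing the two decompositions block by block. Set $a_k=\|\eta_k^\alpha(D)f\|_{L^2}$, $b_\ell=\|\varphi_\ell(D)f\|_{L^2}$, and
\[
\Lambda_k=\{\ell\in\Z^n:\supp\varphi_\ell\cap\supp\eta_k^\alpha\neq\varnothing\}.
\]
Since $\supp\eta_k^\alpha$ sits in a ball of radius $\sim\langle k\rangle^{A}$ while each $\varphi_\ell$ has unit-size support, $\#\Lambda_k\sim\langle k\rangle^{An}$; conversely, the separation of the centers $\langle k\rangle^Ak$ (Lemma~\ref{k and m}) implies that every $\ell\in\Z^n$ belongs to only $O(1)$ of the sets $\Lambda_k$.

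The core step is the almost-orthogonality equivalence
\[
a_k^2\sim\sum_{\ell\in\Lambda_k}b_\ell^2,
\]
which follows from Plancherel applied to the partition $\sum_\ell\varphi_\ell=1$ together with the bounded overlap of both families. I expect this step to be the main technical point, since the regions where $\eta_k^\alpha$ or $\varphi_\ell$ is neither $0$ nor $1$ have to be absorbed into constants. The cleanest route is to first invoke Proposition~\ref{equivalent norm 0} and replace $\eta_k^\alpha$ by $\varrho_k^\alpha$, which equals $1$ on the inner ball of radius $C\langle k\rangle^A$, so that $a_k^2$ is comparable to $\int_{B_k}|\widehat f|^2\,d\xi$ over a concrete ball $B_k$; the partition $\{\varphi_\ell\}_{\ell\in\Lambda_k}$ then essentially tiles $B_k$ up to bounded overlap.

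With this dictionary, each inclusion reduces to an elementary finite-dimensional $\ell^q$-inequality on the block $\Lambda_k$. Writing
\[
\|f\|_{M_{2,q}^0}^q\sim\sum_k\sum_{\ell\in\Lambda_k}b_\ell^q
\qquad\text{and}\qquad
\|f\|_{M_{2,q}^{s,\alpha}}^q\sim\sum_k\langle k\rangle^{sq/(1-\alpha)}\Bigl(\sum_{\ell\in\Lambda_k}b_\ell^2\Bigr)^{q/2},
\]
inclusion (1) follows, for $q\geq2$, from the contraction $\|\cdot\|_{\ell^q(\Lambda_k)}\leq\|\cdot\|_{\ell^2(\Lambda_k)}$ with no loss (so $s_1=0$), and for $q\leq2$ from the reverse H\"older bound $\|\cdot\|_{\ell^q(\Lambda_k)}\leq(\#\Lambda_k)^{1/q-1/2}\|\cdot\|_{\ell^2(\Lambda_k)}$, which costs a factor $\langle k\rangle^{An(1/q-1/2)}$ and yields $s_1=n\alpha(1/q-1/2)$. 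Inclusion (2) is the mirror image: for $q\leq2$ the contraction $\|\cdot\|_{\ell^2(\Lambda_k)}\leq\|\cdot\|_{\ell^q(\Lambda_k)}$ gives $s_2=0$, and for $q\geq2$ the reverse H\"older bound $\|\cdot\|_{\ell^2(\Lambda_k)}\leq(\#\Lambda_k)^{1/2-1/q}\|\cdot\|_{\ell^q(\Lambda_k)}$ gives $s_2=n\alpha(1/q-1/2)$. In every case, summing over $k$ and invoking that each $\ell\in\Z^n$ belongs to $O(1)$ many $\Lambda_k$ converts the double sum $\sum_k\sum_{\ell\in\Lambda_k}$ into $\sum_\ell$ up to an absolute constant, completing the proof.
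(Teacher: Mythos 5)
The paper does not actually prove Lemma \ref{embedding 0-1}; it is quoted from Han--Wang \cite{han wang 2014} and Toft--Wahlberg \cite{toft wahlberg 2012}, so there is no in-paper argument to compare against. Your proof is a correct, self-contained derivation and is essentially the standard one for $p=2$: Plancherel turns both norms into weighted $\ell^q$ sums of $L^2$ Fourier masses, the uniform blocks tile each $\alpha$-block with $\#\Lambda_k\sim\langle k\rangle^{An}$ and with bounded reverse overlap (the latter following from Lemma \ref{k and m} exactly as you indicate), and the H\"older/embedding inequalities on blocks of cardinality $\langle k\rangle^{An}$ produce precisely the exponents $s_i=n\alpha|1/q-1/2|$ once one checks that $\langle k\rangle^{An(1/q-1/2)}=\langle k\rangle^{s/(1-\alpha)}$ with $s=n\alpha(1/q-1/2)$ --- which it does. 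The only point I would tighten is the almost-orthogonality step: as literally stated, $a_k^2\sim\sum_{\ell\in\Lambda_k}b_\ell^2$ is slightly too strong in the direction $\sum_{\ell\in\Lambda_k}b_\ell^2\lesssim a_k^2$, because $\bigcup_{\ell\in\Lambda_k}\supp\varphi_\ell$ slightly overflows $\supp\eta_k^\alpha$; the correct two-sided statement is $a_k^2\lesssim\sum_{\ell\in\Lambda_k}b_\ell^2\lesssim\sum_{|k'-k|\lesssim 1}a_{k'}^2$, proved using $\sum_k|\eta_k^\alpha|^2\gtrsim 1$ and $\sum_\ell|\varphi_\ell|^2\gtrsim 1$ (Cauchy--Schwarz plus bounded overlap applied to the partitions of unity). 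Since the weights $\langle k\rangle^{s/(1-\alpha)}$ are comparable over $|k-k'|\lesssim1$, this harmless modification does not affect any of your block inequalities, and the argument goes through, including the obvious $\sup$ modifications when $q=\infty$.
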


Now, let us start the proof.

\begin{proof}[Proof of Corollary \ref{sharp cor}] 
The ``IF'' part immediately follows from the relation 
$S^{0}_{\rho, \delta} \subset S^{0}_{\alpha, \alpha}$ for $\delta \leq \alpha \leq \rho$ 
and Theorem \ref{main theorem}, 
so that we only consider the ``ONLY IF'' part. 
We assume that all $\sigma(X,D) \in \op (S_{\rho, \delta}^0)$ 
are bounded on $M_{2,q}^{s,\alpha} (\R^n)$.
Then all $\widetilde{\sigma}(X,D) \in \op (S_{\rho, \delta}^{-s_1+s_2})$ 
are also bounded on $M^0_{2,q} (\R^n)$,
where $s_1,s_2$ are as in Lemma \ref{embedding 0-1}.
Indeed, the symbolic calculus shows that
$J^{- s + s_1} \widetilde \sigma (X,D) J^{s - s_2}
\in \op(S_{\rho, \delta}^0)$
for $\widetilde{\sigma} \in S_{\rho, \delta}^{-s_1+s_2}$,
where $J = (I-\Delta)^{1/2}$.
Therefore, we have by Proposition \ref{lift op} and Lemma \ref{embedding 0-1}
\begin{align*}
\left\| \widetilde \sigma (X,D) f \right\|_{M_{2,q}^{0}} 
&\lesssim
\left\| \widetilde \sigma (X,D) f \right\|_{M_{2,q}^{s_1,\alpha}} 
\lesssim 
\left\| J^{- s + s_1} \widetilde \sigma (X,D) f \right\|_{M_{2,q}^{s, \alpha}} 
\\
&= 
\left\| J^{- s + s_1} \widetilde \sigma (X,D) J^{s - s_2} J^{-s + s_2} f \right\|_{M_{2,q}^{s, \alpha}} 
\lesssim \left\| J^{ - s + s_2}  f \right\|_{M_{2,q}^{s, \alpha}} 
\sim \left\| f \right\|_{M_{2,q}^{s_2, \alpha}} 
\lesssim \left\| f \right\|_{M_{2,q}^{0}} .
\end{align*}
This yields that
$\op (S_{\rho, \delta}^{-s_1+s_2}) \subset \mathcal{L} \big(M_{2,q}^{0} (\R^n) \big)$,
and thus Theorem A gives $- s_1 + s_2 \leq - | 1/q - 1/2 | \delta n$.
Here, since $-s_1+s_2 = -n\alpha|1/q-1/2|$,
this is equivalent to $0 \leq - | 1/q - 1/2 | ( \delta - \alpha ) n $. 
Hence, because of $q \neq 2$, we obtain $\delta \leq \alpha$, 
which concludes the proof of ``ONLY IF'' part in Corollary \ref{sharp cor}.
\end{proof}

\begin{remark} \label{alpha -}
In this remark, we find a counterexample to the inclusion 
$
\op (S_{\alpha - \varepsilon, \alpha - \varepsilon}^0) 
\subset 
\mathcal{L}(M_{p,q}^{s,\alpha})
$ 
for $0 < \varepsilon < \alpha$ and $0 < p < 1$. 
We write
$A_\varepsilon = \frac{\alpha - \varepsilon}{ 1 - \alpha}$ 
for $0 < \varepsilon < \alpha$. 
Choose $\psi , \widetilde \psi \in \calS( \R^n ) $ satisfying that 
$\supp \psi \subset \{ \xi \in \R^n : | \xi | \leq c \}$, 
$\widetilde \psi (\xi) = 1$ on $\{ \xi \in \R^n : | \xi | \leq c \}$ 
and $\supp \widetilde \psi \subset \{ \xi \in \R^n : | \xi | \leq 2c \}$. 
Here, the constant $c=c_\alpha > 0$ is so small that the sets $B_k$, $k\in\Z^n$, are pairwise disjoint,
where 
$B_k = \{ \xi \in \R^n: | \xi-\langle k \rangle^A k | \leq 2 c \langle k \rangle^A \}$.
We will see in Remark \ref{small c} that such a constant $c>0$ exists. 
Set
\begin{equation*}
\sigma(\xi) = \sum_{m \in \Z^n} \psi \left( \frac{ \xi-\langle m \rangle^A m}{ \langle m \rangle^{A_\varepsilon} } \right) 
\aand
\widehat{ f_{\ell} } (\xi) = \widetilde \psi \left( \frac{ \xi - \langle \ell \rangle^A \ell}{ \langle \ell \rangle^{A} } \right)
\end{equation*}
for all $\ell \in \Z^n$. 
Here, $\psi ( \frac{ \cdot - \langle m \rangle^A m}{ \langle m \rangle^{A_\varepsilon} } ) \subset B_m$,
since $A > A_\varepsilon $.
Then, it follows that
\begin{equation} \label{psi2}
\supp \psi \left( \frac{ \cdot - \langle m \rangle^A m}{ \langle m \rangle^{A_\varepsilon} } \right) 
\cap \supp \widetilde\psi \left( \frac{ \cdot - \langle \ell \rangle^A \ell }{ \langle \ell \rangle^{A} } \right) 
\subset B_m \cap B_\ell
= \varnothing 
\iif 
m \neq \ell
\end{equation}
and
\begin{equation} \label{psi3}
\widetilde \psi \left( \frac{ \xi - \langle m \rangle^A m}{ \langle m \rangle^{A} } \right)  = 1 
\textrm{ on } 
\supp \psi \left( \frac{ \cdot - \langle m \rangle^A m}{ \langle m \rangle^{A_\varepsilon} } \right) .
\end{equation} 
In addition, note that at most one term in the sum defining $\sigma$ is non-zero for each $\xi$, 
since 
\begin{equation}
\label{psi4}
\supp \psi \left( \frac{ \cdot - \langle m \rangle^A m}{ \langle m \rangle^{A_\varepsilon} } \right) 
\cap 
\supp \psi \left( \frac{ \cdot - \langle m^\prime \rangle^A m^\prime }{ \langle m^\prime \rangle^{A_\varepsilon} } \right)
\subset B_m \cap B_{m'}
= \varnothing 
\iif
m \neq m^\prime.
\end{equation}
Then, since $\langle \xi \rangle \sim \langle m \rangle^{ \frac{ 1 }{ 1 - \alpha } }$ 
if $\xi \in \supp \psi \left( \frac{ \cdot - \langle m \rangle^A m}{ \langle m \rangle^{A_\varepsilon} } \right)$
and $\sigma$ is the $x$-independent symbol,
we see that $\sigma \in S_{\alpha - \varepsilon, \delta}^0$ 
for any $0 \leq \delta \leq 1$. 
In particular, $\sigma \in S_{\alpha - \varepsilon, \alpha - \varepsilon}^0$. 
Now, by using these functions $\sigma$ and $f_\ell$, 
we will prove that 
$\op (S_{\alpha - \varepsilon, \alpha - \varepsilon}^0) \subset \mathcal{L}(M_{p,q}^{s,\alpha})$ 
does not hold for $0 < p < 1$. 

We first estimate the $\alpha$-modulation space quasi-norm of $f_\ell$. 
By Proposition \ref{equivalent norm 0}, we have
\begin{align*}
\| f_\ell \|_{M_{p,q}^{s,\alpha}} 
&\sim
\left\| \langle k \rangle^{s/(1-\alpha)} \left\| \varrho_k^\alpha(D) f_\ell \right\|_{L^p} \right\|_{\ell^q (\Z^n_k)}
\\
&=
\Bigg\|
\langle k \rangle^{\frac{s}{1-\alpha}} 
\left\| \left (\calF^{-1} \left[ \varrho_k^\alpha \right] \right)
\ast 
\left(\calF^{-1} 
\left[ \widetilde \psi \left( \frac{ \cdot - \langle \ell \rangle^A \ell}{ \langle \ell \rangle^{A} } \right)\right] 
\right) \right\|_{L^p} \Bigg\|_{\ell^q (\Z^n_k:| k - \ell | \lesssim 1)}
\end{align*}
for all $\ell \in \Z^n$. 
Here, the summation in $\ell^q(\Z^n_k)$ is restricted to $| k - \ell | \lesssim 1$
(otherwise, $\varrho_k^\alpha(D) f_\ell$ vanishes).
This restriction is due to the relation:
\begin{equation}
\label{fellrhok}
|\langle k \rangle^A k - \langle \ell \rangle^A \ell | \lesssim \langle k \rangle^A+\langle \ell \rangle^A,
\end{equation}
which is obtained from the information of $\supp\varrho_k^\alpha$ and $\supp \widehat{f_\ell}$:
\begin{equation*}
|\xi-\langle k \rangle^A k| \lesssim \langle k \rangle^A
\aand
|\xi-\langle \ell \rangle^A \ell| \lesssim \langle \ell \rangle^A,
\end{equation*}
and Lemma \ref{k and m}. 
Then, since this restriction leads $\langle \ell \rangle \sim \langle k \rangle$, it follows that
\[
\supp \varrho_k^\alpha
\subset
\left\{ \xi \in \R^n : | \xi - \langle k \rangle^A k | \lesssim \langle k \rangle^A \right\}
\subset
\left\{ \xi \in \R^n : | \xi - \langle \ell \rangle^A \ell | \lesssim \langle \ell \rangle^A \right\}.
\]
This is because we have by \eqref{fellrhok}
\begin{align*}
| \xi - \langle k \rangle^A k | \lesssim \langle k \rangle^A
&\Longrightarrow
| \xi - \langle \ell \rangle^A \ell | \lesssim \langle k \rangle^A +| \langle k \rangle^A k - \langle \ell \rangle^A \ell |
\\&\Longrightarrow
| \xi - \langle \ell \rangle^A \ell | \lesssim \langle \ell \rangle^A.
\end{align*}
Hence, Proposition \ref{convolution 0<p<1} gives that
\begin{equation*}
\| f_\ell \|_{M_{p,q}^{s,\alpha}} 
\lesssim
\Bigg\|
\langle \ell \rangle^{\frac{s}{1-\alpha} + An(\frac{1}{p} - 1)}
\left\| \calF^{-1} \left[ \varrho_k^\alpha \right]\right\|_{L^p} 
\cdot
\left\| \calF^{-1}
\left[ \widetilde \psi \left( \frac{ \cdot - \langle \ell \rangle^A \ell}{ \langle \ell \rangle^{A} } \right)\right] 
\right\|_{L^p} \Bigg\|_{\ell^q (\Z^n_k:| k - \ell | \lesssim 1)}.
\end{equation*}
Performing the changes of the variables for both $L^p$ (quasi)-norms
and using the fact that $\langle \ell \rangle \sim \langle k \rangle$,
we conclude that
\begin{align*}
\| f_\ell \|_{M_{p,q}^{s,\alpha}} 
\lesssim 
\langle \ell \rangle^{\frac{s}{(1-\alpha)}} \cdot \langle \ell \rangle^{An(1 - \frac{1}{p})} 
\left\|
1
\right\|_{\ell^q (\Z^n_k:| k - \ell | \lesssim 1)}
\sim 
\langle \ell \rangle^{\frac{s}{1-\alpha}} \cdot \langle \ell \rangle^{An(1 - \frac{1}{p})} 
\end{align*}
for all $\ell \in \Z^n$.

We next consider the $\alpha$-modulation space quasi-norm of $\sigma (X,D) f_\ell$. 
Using \eqref{psi2}, \eqref{psi3} and Proposition \ref{equivalent norm 0}, 
we have
\begin{align*}
\| \sigma(X,D) f_\ell \|_{M_{p,q}^{s,\alpha}} 
&\sim 
\left\| \langle k \rangle^{\frac{s}{1-\alpha}} \left\| \calF^{-1} 
\left[ \varrho_k^\alpha \cdot 
	\left( \sum_{m \in \Z^n} \psi \left( \frac{ \cdot - \langle m \rangle^A m}{ \langle m \rangle^{A_\varepsilon} } \right) \right ) 
\cdot 
\widetilde \psi \left( \frac{ \cdot - \langle \ell \rangle^A \ell}{ \langle \ell \rangle^{A} } \right) 
\right] 
\right\|_{L^p} \right\|_{\ell^q (\Z^n_k)}
\\
&\geq
\langle \ell \rangle^{\frac{s}{1-\alpha}} 
\left\| \calF^{-1} \left[ \varrho_\ell^\alpha 
\cdot 
\psi \left( \frac{ \cdot - \langle \ell \rangle^A \ell}{ \langle \ell \rangle^{A_\varepsilon} } \right) \right] 
\right\|_{L^p} .
\end{align*}
If we recall the definition of the function $\varrho$ in Proposition \ref{equivalent norm 0}, 
we see that $\varrho_\ell^\alpha (\xi) = 1 $ 
on the support of 
$\psi \left( \frac{ \cdot - \langle \ell \rangle^A \ell}{ \langle \ell \rangle^{A_\varepsilon} } \right)$
(possibly after shrinking the constant $c=c_\alpha$ further). 
Hence, we obtain
\begin{align*}
\| \sigma(X,D) f_\ell \|_{M_{p,q}^{s,\alpha}} 
\gtrsim 
\langle \ell \rangle^{\frac{s}{1-\alpha}} 
\left\| \calF^{-1} 
\left[ \psi \left( \frac{ \cdot - \langle \ell \rangle^A \ell}{ \langle \ell \rangle^{A_\varepsilon} } \right) \right] 
\right\|_{L^p} 
\sim 
\langle \ell \rangle^{\frac{s}{1-\alpha}} \cdot \langle \ell \rangle^{A_\varepsilon n ( 1 - \frac{1 }{p} ) }.
\end{align*}

We are now in position to prove the conclusion of this remark. 
We assume toward a contradiction that 
$\sigma (X,D)$ is bounded on $M_{p,q}^{s,\alpha}$. 
Then, we have
\begin{equation*}
\langle \ell \rangle^{\frac{s}{1-\alpha}} \cdot\langle \ell \rangle^{A_\varepsilon n (1-\frac{1}{p})} \lesssim \| \sigma(X,D) f_\ell \|_{M_{p,q}^{s,\alpha}} \lesssim \|  f_\ell \|_{M_{p,q}^{s,\alpha}} \lesssim \langle \ell \rangle^{\frac{s}{1-\alpha}} \cdot\langle \ell \rangle^{An (1-\frac{1}{p})} 
\end{equation*}
for all $\ell \in \Z^n$. However, since $A_\varepsilon < A$ and $0 < p < 1$, this is a contradiction. Therefore, $\sigma$ belongs to $S_{\alpha - \varepsilon, \alpha - \varepsilon}^0$, but $\sigma(X,D)$ is not bounded on $M_{p,q}^{s,\alpha}$. 
\end{remark}

\begin{remark} \label{small c}
We determine the detail quantity of the small constant $c=c_\alpha > 0$
which was used to ensure that the sets $B_{k}$ in Remark \ref{alpha -} are pairwise disjoint.
Assume that $B_m \cap B_\ell \neq \varnothing$,
which implies
\[
| \langle \ell \rangle^A \ell  - \langle m \rangle^A m |
\leq 2c (\langle \ell \rangle ^A + \langle m \rangle^A ) .
\]
We here recall from Lemma \ref{k and m} that
\[
(\langle \ell \rangle ^A + \langle m \rangle^A ) | \ell - m | 
\leq K \left| \langle \ell \rangle^A \ell  - \langle m \rangle^A m \right|
\]
holds for all $m,\ell \in \Z^n$, where the constant $K = K_\alpha > 0$ 
(careful reading gives that this constant is $\max (6, 1 + 2^A)$ at most).
Then, we have
\[
| m - \ell | 
\leq K 
\left| \langle \ell \rangle^A \ell  - \langle m \rangle^A m \right| 
/ (\langle \ell \rangle ^A + \langle m \rangle^A ) 
\leq 2cK.
\]
Choosing the constant $c$ satisfying $2cK < 1$,
we have $|m-\ell|<1$, i.e., $m=\ell$.
\end{remark}



\section*{Acknowledgments}
The authors sincerely express deep gratitude to the anonymous referees 
for their careful reading and giving fruitful suggestions and comments.
The first author is supported by Grant-in-Aid for JSPS Research Fellow (No. 17J00359). 
The second author is partially supported by Grant-in-aid for Scientific Research from JSPS (No. 16K05201).




\end{document}